\def\inte{\mathrm{Int}}
\newcommand{\bbR}{{\mathbb{R}}}
\newcommand{\bbZ}{{\mathbb{Z}}}
\def\cA{{\cal A}}    \def\cS{{\cal S}}
\def\cF{{\cal F}}
\def\?{$^{***}$\marginpar{?}}
\newtheorem{theo}{Theorem}
\newtheorem*{ques*}{Question}
\newtheorem*{prop*}{Proposition}
\newtheorem*{conj*}{Conjecture}
\newtheorem*{theo*}{Theorem}
\newtheorem{coro}{Corollary}[section]
\newtheorem{affi*}{Affirmation}
\newtheorem{prop}[coro]{Proposition}
\newtheorem{lemm}[coro]{Lemma}
\newtheorem{sublemm}[coro]{Sub-lemma}
\newtheorem*{lemm*}{Lemma}
\def\?{\footnote{?}}
\newlength{\espaceavantspecialthm}
\newlength{\espaceapresspecialthm}
\newenvironment{defi}[1][]{\refstepcounter{coro} 
\vskip \espaceavantspecialthm \noindent \textbf{D\'efinition~\thecoro
#1.} }%
{\vskip \espaceapresspecialthm}
\title{Polynomial entropy of Brouwer homeomorphisms}
\author{Louis Hauseux and Fr\'ed\'eric Le Roux
}
\begin{document}
\sloppy 

\maketitle

\begin{abstract}
We study the polynomial entropy of the wandering part of any invertible dynamical system on a compact metric space. As an application we compute the polynomial entropy of
Brouwer homeomorphisms (fixed point free orientation preserving homeomorphisms of the plane), and show in particular that it takes every real value greater or equal to 2. 
\end{abstract}



\section{Introduction}
Polynomial entropy has been introduced by J.-P. Marco in the context of integrable hamiltonian maps (see~\cite{Mar13}).
Remember that (classical) topogical entropy measures the exponential growth rate of the number of orbits of length $n$ that one can distinguish at some small scale. When the entropy vanishes, that is, when the growth rate is sub-exponential, one can try to measure the polynomial growth rate, and this leads to the definition of polynomial entropy.
Among low-complexity systems, this conjugacy invariant can be used to quantify the intuition that some dynamical systems are less complex than others. Let us quote two results in that direction.
First, Cl\'emence Labrousse studied the polynomial entropy of circle homeomorphisms and torus flows, and showed that circle homeomorphisms have polynomial entropy $0$ or $1$ and that the value $0$ caracterizes the  conjugacy classes of rotations (see~\cite{Lab13}). Second, she also studied the polynomial entropy of geodesic flows for riemaniann metrics on the two torus: in a work with Patrick Bernard, they showed that the geodesic flow has polynomial entropy $1$ if and only if the torus is isometric to a flat torus (see~\cite{BerLab16}).

This text has two aims. In section~\ref{s.wandering}, we propose to study the polynomial entropy of the wandering part of any dynamical system. We point out that polynomial entropy is especially adapted here since the growth of wandering orbits is always polynomial (see the remarks after proposition~\ref{prop.localization} below). In this general context, we show that the polynomial entropy localizes near certain finite sets, and that it may be computed by a simple dynamical coding (see Proposition~\ref{prop.localization} below).
Next, in section~\ref{s.brouwer} we will apply this study to compute the polynomial entropy of Brouwer homeomorphisms, and prove the following results.

\begin{theo}~~ \label{theo}
\begin{enumerate}
\item The polynomial entropy of a Brouwer homeomorphism $f$ is equal to $1$ if and only if $f$ is conjugate to a translation. 

\item There is no Brouwer homeomorphisms with polynomial entropy in $(1,2)$.

\item For every $\alpha \in [2,+\infty]$, there exists a Brouwer homeomorphism $f_{\alpha}$ with $h_{pol}(f_{\alpha}) =\alpha$.
\end{enumerate}
\end{theo}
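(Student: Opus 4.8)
\bigskip
\noindent\textbf{Proof strategy (proposal).}~ The plan is to deduce all three statements from the localization principle of Proposition~\ref{prop.localization}, applied to the wandering dynamics of $f$. By Brouwer's plane translation theorem a fixed point free orientation preserving homeomorphism of the plane has no periodic point, and in fact every point of $\bbR^{2}$ is wandering for the cyclic group $\langle f\rangle$; after embedding the dynamics in a compact space (for instance on $\bbS^{2}=\bbR^{2}\cup\{\infty\}$ with $\infty$ fixed) we are exactly in the situation of Proposition~\ref{prop.localization}, so that $h_{pol}(f)$ is computed by a finite dynamical coding. The first task is to describe this coding intrinsically: I would cut the plane with a locally finite family of pairwise disjoint Brouwer lines -- equivalently, choose a brick decomposition adapted to $f$, or use an invariant foliation by Brouwer lines in the sense of Le Calvez -- so that $f$ maps each brick onto a union of bricks. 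The combinatorial object that then controls $h_{pol}(f)$ is the ``shape'' of the orbit space of $f$: a simply connected, possibly non-Hausdorff $1$--manifold carrying the induced action of $f$, whose branch points correspond to Reeb-type reductions of the homeomorphism. Concretely, $h_{pol}(f)$ should equal the polynomial growth rate in $n$ of the number of length-$n$ brick itineraries that are pairwise distinguishable at a fixed scale.

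For items~1 and~2 I would argue by the dichotomy ``the orbit space of $f$ is Hausdorff or not''. If it is Hausdorff, then (by the classical theory of properly discontinuous actions on the plane) the action of $\langle f\rangle$ is properly discontinuous, the quotient is an annulus, and $f$ is conjugate to the translation $\tau(x,y)=(x+1,y)$ -- equivalently, $f$ admits a Brouwer line bounding a global fundamental domain; a direct estimate of $(n,\varepsilon)$--spanning sets for $\tau$ shows the relevant count grows linearly in $n$, whence $h_{pol}(f)=1$. If the orbit space is not Hausdorff, there is a pair of leaves that cannot be separated -- a Reeb-type reduction -- and analysing the return dynamics near it should show that already the orbits meeting that region produce a count that grows at least like $n^{2}$: one power of $n$ from the possible crossing times inside the window $\{0,\dots,n-1\}$, a second power of $n$ from the transverse directions that the slow passage near the non-separated leaf makes visible. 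Together these give item~1 (the value $1$ characterises translations) and item~2 (no value is attained in $(1,2)$).

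For item~3 I would realize the prescribed value by prescribing the combinatorics of the orbit space. For $\alpha=2$, a single Reeb reduction (the classical simplest non-trivial Brouwer homeomorphism) should give $h_{pol}=2$. For $\alpha\in(2,+\infty)$ I would build $f_{\alpha}$ by gluing a controlled ``cascade'' of reductions along the orbit space, with the rate at which new branches appear tuned so that the number of length-$n$ itineraries distinguishable at a fixed scale grows like $n^{\alpha}$; the coding of the first paragraph then yields $h_{pol}(f_{\alpha})=\alpha$. For $\alpha=+\infty$ one lets the branching be super-polynomial -- for example an exponentially branching cascade, or infinitely many reductions accumulating at an end of the orbit space -- so that the itinerary count beats every power of $n$.

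The main obstacle is the matching \emph{upper} bound in item~3 for non-integer $\alpha$: one must guarantee that the constructed homeomorphism hides no dynamical complexity beyond what the prescribed branching records, which is precisely the quantitative content that Proposition~\ref{prop.localization} has to supply, and the cascade must be arranged so that its growth exponent is genuinely the real number $\alpha$ rather than merely caught between two consecutive integers. A secondary difficulty, and what makes $(1,2)$ a true gap rather than an artefact, is the quadratic jump in item~2: one has to prove that the mere presence of one non-separated pair of leaves already forces growth $\gtrsim n^{2}$, and not just super-linear growth.
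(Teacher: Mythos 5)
Your overall roadmap agrees with the paper at the level of strategy: compactify on $\bbS^2$ to land in the setting of Proposition~\ref{prop.localization}, use the Hausdorff/non-Hausdorff dichotomy for the orbit space to separate translations from the rest (the paper's footnote gives exactly this argument: no singular pair $\Rightarrow$ Hausdorff orbit space $\Rightarrow$ properly discontinuous action $\Rightarrow$ annulus quotient $\Rightarrow$ conjugate to a translation), and realize large polynomial entropies by gluing translations. But the two steps you flag as ``difficulties'' are precisely the places where your sketch has no content and where the paper does real work, so as written the proposal has genuine gaps.

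\textbf{The $n^2$ lower bound for item 2.} Your heuristic — ``one power of $n$ from the crossing time, a second power from the transverse directions'' — misidentifies the mechanism. In the paper both factors of $n$ are combinatorial, coming from the coding: once you know there are arbitrarily large transition times $n$ with $f^n(U)\cap V\neq\emptyset$, a length-$n$ word has a free choice of the position of $U$ and a free choice of the gap before $V$, giving $\sim n^2/2$ admissible codings. What makes this work — and what your proposal is missing — is that the set of admissible transition times is not merely unbounded but \emph{an interval of integers}. That is Franks's lemma for Brouwer homeomorphisms, and it is the specific input that turns ``there exist arbitrarily large transition times'' into ``all sufficiently large times are transition times''. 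Without it one only gets a sparse set of valid gap lengths and cannot conclude growth $\gtrsim n^2$. Nothing in your appeal to ``transverse directions near the non-separated leaf'' supplies this interval property.

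\textbf{The construction and the upper bound for item 3.} Saying the cascade of reductions must be ``tuned'' so the itinerary count grows like $n^\alpha$ is a restatement of the goal, not a construction. The paper's construction is delicate: it glues $L$ planes where $\alpha\in(L-1,L]$, and the gluing maps $\varphi_{k,k+1}$ are staircase functions whose steps are nested so that, given a first transition time $k_1$, the remaining $L-2$ transition times range over $\{k_1+1,\dots,2k_1\}$ and the last one over $\{k_1+1,\dots,k_1+k_1^{\alpha-L+1}\}$; this nesting is precisely what produces $\sum_k k^{\alpha-1}\sim n^\alpha$ rather than an integer power of $n$. The matching upper bound then hinges on the monotonicity of the $\varphi$'s: the staircase structure shows that once $k_1$ is fixed the last transition time is confined to an interval of length $O(k_1^{\alpha-L+1})$, killing the spurious extra power of $n$. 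Your proposal contains neither the nesting mechanism that creates a fractional exponent nor the localization/monotonicity argument that bounds it from above; you correctly observe that the upper bound is ``the main obstacle'' but offer no idea for closing it.

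In short: the framing is right and consistent with the paper, but the two quantitative pillars — Franks's interval lemma for the $n\geq 2$ gap, and the nested-staircase gluing together with its upper-bound analysis for item 3 — are missing, and you have identified them as open rather than resolved them.
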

The last point is the most interesting.
It seems that these are the first (natural) examples with non integer polynomial entropy, and also with zero entropy but infinite polynomial entropy.
It was already well known that there are uncountably many distinct conjugacy classes of Brouwer homeomorphisms ; Theorem~\ref{theo} provides a simple evidence \emph{via} a 	numerical conjugacy invariant.
Note that all our examples are, in some sense, very nice: they are time one maps of $C^\infty$ flows exhibiting a single Reeb component with a finite number of boundary components. To get these examples, we will make use of a technique which is due to Nakayama, and that was developed later by Fran\c{c}ois B\'eguin and the second author (see~\cite{Nak95,BegLer03}). There should be some links between polynomial entropy and the "oscillating set", another conjugacy invariant that was introduced in the last quoted paper; in particular, it is plausible that a high polynomial entropy forces the oscillating set to be non empty.

\section{Polynomial entropy of wandering dynamics}\label{s.wandering}

\subsection{Context and definition}

Let $W$ be a metric space, and $g: W \to W$ be a homeomorphism\footnote{What follows can probably be extended to continuous maps.}.
Remember that a  set $Y$ is \emph{wandering} if $g^n(Y) \cap Y =\emptyset$ for every $n \neq 0$, and a point is wandering if it admits a wandering neighborhood. We denote the set of non-wandering points of $f$ by $NW(g)$; this set is invariant under $g$. Let $\hat W$ denote the quotient obtained from $W$ by identifying all the elements of $NW(g)$, and $\hat g: \hat W \to \hat W$ be the induced homeomorphism.
Note that every point of $\hat W$ is wandering under $\hat g$, except the point which is the image of $NW(g)$ and that we denote by $\infty$.   The Poincar\'e recurrence theorem implies that  the only invariant measure for $\hat g$ is the Dirac measure at the point $\infty$, and thus (by the maximum principle) the topological entropy of $\hat g$ vanishes. This motivates the following definition.

\begin{defi}
The \emph{wandering polynomial entropy} of $g$ is the polynomial entropy of $\hat g: \hat W \to \hat W$.
\end{defi}

The topological space $\hat W$ is metrizable\footnote{A natural metric $\delta$ is given by the formulae $\delta(x,NW(g))= \inf\{d(x,y), y \in NW(g)\}$ if $x \not \in NW(g)$, and 
$\delta(x,y) = \min(d(x,y), \delta(x,NW(g))+\delta(y,NW(g))))$ when $x,y \not \in NW(g)$.}. 
Thus we are led to study the polynomial entropy of a homeomorphism of a compact metric space whose non-wandering set is reduced to one point. This is the setting that we will adopt to develop a general theory of the wandering polynomial entropy.

\bigskip

Throughout the text, $X$ denotes a compact metric space, and $\infty$ denotes some given point of $X$. We consider a homeomorphism $f:X \to X$ that fixes $\infty$, with the following standing hypothesis: \emph{every point except $\infty$ is a wandering point}.

We recall the definition of the polynomial entropy of $f$ (see~\cite{Mar13}). Given $\varepsilon>0$  and a positive integer $n$,  a subset $E$ of $X$ is said to be $(n,\varepsilon)$-separated if for every two distinct points $x,y \in E$ there exists some $k \in \{0, \dots , n-1\}$ such that $d(f^k(x), f^k(y))>\varepsilon$. The maximal number of elements of an $(n,\varepsilon)$-separated  set is denoted by $S(n,\varepsilon)$. Then the polynomial entropy of $f$ is defined by
$$
h_{pol}(f) = \lim_{\varepsilon \to 0} \limsup_{n \to \infty} \frac{\log S(n,\varepsilon)}{\log (n)}.
$$
Alternatively, this quantity may be defined in terms of $(n,\varepsilon)$-covering set, or in terms of iterated covers. In particular, it is a topological conjugacy invariant (see~\cite{Mar13}).

\subsection{Coding}\label{ss.coding}

Let $\cF$ be a finite family of non empty subsets of $X \setminus \{\infty\}$. 
We denote by $\cup \cF$ the union of all the elements of $\cF$, and by $\infty_\cF$ the complement of  $\cup \cF$ (when there is no risk of confusion with the point $\infty$ we will denote it just by $\infty$).
We fix a positive integer $n$.
Let $\underline{x} = (x_{0}, \dots , x_{n-1})$ be a finite sequence of points in $X$, and
 $\underline{w} = (w_0, \dots , w_{n-1})$ be a finite sequence of elements of $\cF\cup\{\infty_\cF\}$.
We say that $\underline{w}$ is a \emph{coding} of $\underline{x}$, 
relative to $\cF$,
 if for every $k=0, \dots n-1$ we have
$x_{k} \in w_k$.
Note that a sequence may have several codings when the sets of $\cF$ are not disjoint.
We denote by $A_{n}(\cF)$ the set of all codings of all orbits $(x, f(x), \dots , f^{n-1}(x))$ of length $n$.
Then we define the \emph{polynomial entropy of the family $\cF$} to be the number
$$
h_{pol}(f; \cF) =  \limsup_{n \to \infty} \frac{\log \sharp A_{n}(\cF)}{\log (n)}.
$$
If $\cF = \{Y\}$ contains only one element then we denote $h_{pol}(f; \{Y\})$ more simply by $h_{pol}(f; Y)$. In general we will omit the '$f$' when the map is clear from the context\footnote{Unfortunately this notation has already been used to denote a different quantity, see definition 1 in ~\cite{Mar13}.}.

\paragraph{Example}
Let $A$ be the linear map $(x,y) \mapsto (2x, y/2)$ in the plane. To fit our setting we first compactify the plane by adding the point at infinity, and then identify the point at infinity and the fixed point $0$ to get a set $X$ and a map $f$; but since we work with compact subsets of $X \setminus \{\infty\}$ we may identify them with subset of $\bbR^2 \setminus \{0\}$.
Let $Y_1, Y_2$ be two disks, not containing the origin, whose interiors meet respectively the '$y$' axis and the '$x$' axis. To simplify the computation we assume the disks are small, so that each one do not meet its image under $A$.
First, we have $h_{pol}(A;Y_1)=h_{pol}(A;Y_1)=1$. Indeed, for instance, the elements of $\cA_n(Y_1)$ are exactly all the words of the form $(\infty, \dots, \infty, Y_1, \infty, \dots, \infty)$, thus it contains $n$ elements.
Next, we have $h_{pol}(A;\{Y_1, Y_2\})=2$. Indeed, every element of $\cA_n(\{Y_1, Y_2\})$ is a word of the form $(\infty, \dots, \infty, Y_1, \infty, \dots, \infty, Y_2 , \infty, \dots, \infty)$, and thus it has at most $n(n-1)$ elements: this gives the upper bound.
 For the lower bound, we note that there exists some positive integer $L$ such that for every $n \geq L$, $f^n(Y_1)$ meets $Y_2$; thus $\cA_n(\{Y_1, Y_2\})$ contains all the words of the above form  where the middle sequence has length $k \geq L-1$. This gives the estimate
$$
\sharp \cA_n(\{Y_1, Y_2\}) \geq \sum_{k=L-1}^{n-2} (n-k-1) \sim \sum_{\ell=1}^n \ell \sim   \frac{1}{2} n^2.
$$
This example will be generalized with the notion of singular set in section~\ref{ss.singular-set} below. 

\bigskip

For every subset $Y$ of $X\setminus \{\infty\}$ we denote by $M(Y)$ the maximum number of terms of an orbit that belongs to $Y$:
$$
M(Y) = \sup_{x \in X} \sharp \{n , f^n(x) \in Y \}.
$$
We note that when $Y$ is compact, it may be covered by a finite number of wandering open sets, and every orbit intersects a wandering set at most once: thus in this case $M(Y) < +\infty$. Also note that if $n$ is  large compared to $M(\cup \cF)$, then most of the letters of a word in $\cA_n(\cF)$ are equal to $\infty_\cF$. This remark leads to the following lemma.
\begin{lemm}
\label{lemm.add-mon}
We consider a finite family $\cF$ of subsets of $X$ such that $M(\cup\cF) < +\infty$.
\begin{enumerate}
\item (monotonicity)
Let $\cF'$ be another finite family of subsets of $X$. If each element of $\cF'$ is included in an element of $\cF$, then
$$
h_{pol}(\cF') \leq h_{pol}(\cF).
$$
\item (additivity) 
$$
h_{pol}( \cup \cF) = h_{pol}( \cF).
$$
\item (wandering additivity) If $\cF=\{Y_1, \dots, Y_L \}$ is such that
$Y_{1} \cup Y_2$ is wandering, then
$$
h_{pol}( \cF) = \max( h_{pol}(Y_{1},Y_3, \dots , Y_{L}), h_{pol}( Y_{2},Y_3, \dots , Y_{L})).
$$
\end{enumerate}
\end{lemm}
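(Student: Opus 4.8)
The plan is to work directly with the coding sets $A_n(\cdot)$ and bound their cardinalities, since all three statements are comparisons of the quantities $\limsup \log\sharp A_n / \log n$. Throughout I will use the hypothesis $M(\cup\cF)<+\infty$ (automatic when the sets are compact) in the following crucial way: there is a constant $M$ such that any word in $A_n(\cF)$ has at most $M$ letters different from $\infty_\cF$. Hence a word in $A_n(\cF)$ is determined by the (at most $M$) positions where a non-$\infty$ letter occurs, together with the choice of letter at each such position; this gives the rough a~priori bound $\sharp A_n(\cF) \le \sum_{j=0}^{M}\binom{n}{j}(\sharp\cF)^{j} = O(n^{M})$, which already shows all these polynomial entropies are finite, and more importantly it is the combinatorial skeleton for the finer estimates below.

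For \textbf{monotonicity}, suppose every $Y'\in\cF'$ is contained in some $\iota(Y')\in\cF$. First note $M(\cup\cF')\le M(\cup\cF)<+\infty$. Given an orbit segment $(x,\dots,f^{n-1}(x))$ and a coding $\underline w'\in A_n(\cF')$ of it relative to $\cF'$, replace each non-$\infty$ letter $w'_k=Y'$ by $\iota(Y')$ and each letter $\infty_{\cF'}$ by $\infty_{\cF}$; since $x_k\in Y'\subset\iota(Y')$ this yields a valid coding relative to $\cF$, so we get a map $A_n(\cF')\to A_n(\cF)$. This map need not be injective, but the number of preimages of a given word $\underline w$ is bounded by $(\sharp\cF')^{m}$ where $m\le M$ is the number of non-$\infty$ letters of $\underline w$ — and we must also worry about positions where $\underline w$ has an $\infty_{\cF}$ that came from a non-$\infty$ letter $w'_k=Y'$ with $Y'\not\subset\cup\cF$... but in fact $Y'\subset\iota(Y')\subset\cup\cF$, so such a position contributes a non-$\infty$ letter of $\underline w$, hence is already counted among the $m\le M$ special positions. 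Therefore $\sharp A_n(\cF')\le (\sharp\cF'+1)^{M}\,\sharp A_n(\cF)$, a multiplicative constant, which disappears after dividing by $\log n$ and taking the $\limsup$. This gives $h_{pol}(\cF')\le h_{pol}(\cF)$.

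For \textbf{additivity}, apply monotonicity in both directions between $\cF$ and the singleton family $\{\cup\cF\}$: each $Y\in\cF$ is contained in $\cup\cF$, giving $h_{pol}(\cF)\le h_{pol}(\cup\cF)$; conversely, for the other inequality one uses that a coding relative to $\{\cup\cF\}$ records only the positions $k$ with $f^k(x)\in\cup\cF$, and any coding relative to $\cF$ refines it — the map $A_n(\cF)\to A_n(\{\cup\cF\})$ forgetting which element of $\cF$ was chosen is at most $(\sharp\cF)^{M}$-to-one and surjective, so $\sharp A_n(\{\cup\cF\})\le\sharp A_n(\cF)\le(\sharp\cF)^M\sharp A_n(\{\cup\cF\})$, whence equality of the two polynomial entropies. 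For \textbf{wandering additivity}, write $\cF'=\{Y_1,Y_3,\dots,Y_L\}$ and $\cF''=\{Y_2,Y_3,\dots,Y_L\}$; by monotonicity $h_{pol}(\cF')\le h_{pol}(\cF)$ and $h_{pol}(\cF'')\le h_{pol}(\cF)$, so $\ge$ is immediate. For $\le$, the key point is that since $Y_1\cup Y_2$ is wandering, along any single orbit $Y_1$ and $Y_2$ are visited at most once in total, so in any word $\underline w\in A_n(\cF)$ the letters $Y_1$ and $Y_2$ together appear at most once. Split $A_n(\cF)$ into the words in which $Y_2$ does not appear and the words in which $Y_1$ does not appear; the first set injects into $A_n(\cF')$ (reading the word and noting $Y_1$ may still legally appear, while all other letters lie in $\cF'\cup\{\infty\}$) and the second into $A_n(\cF'')$, up to replacing $\infty_{\cF}$ by the appropriate $\infty$, which again is at worst a bounded-to-one issue handled as above. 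Hence $\sharp A_n(\cF)\le C(\sharp A_n(\cF')+\sharp A_n(\cF''))\le 2C\max(\sharp A_n(\cF'),\sharp A_n(\cF''))$, and taking $\limsup\log(\cdot)/\log n$ kills the constant and turns the max of cardinalities into the max of polynomial entropies.

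The main obstacle is bookkeeping rather than conceptual: in each step one produces an almost-bijection between coding sets, and one must check carefully that the ``error'' is only a multiplicative constant depending on $M(\cup\cF)$ and $\sharp\cF$, never a factor growing with $n$. The place where this is most delicate is the wandering-additivity step, where one must genuinely use that a word cannot contain both $Y_1$ and $Y_2$ (and not merely that each appears boundedly often) in order to cleanly partition $A_n(\cF)$ into two pieces mapping to $A_n(\cF')$ and $A_n(\cF'')$ respectively; without the wandering hypothesis one would only get $h_{pol}(\cF)\le h_{pol}(\cF')+h_{pol}(\cF'')$-type bounds instead of the max.
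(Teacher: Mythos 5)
Your overall strategy is the same as the paper's: bound cardinalities of coding sets by exhibiting boundedly-many-to-one maps between them, using $M(\cup\cF)<\infty$ to control the number of non-$\infty$ letters. Additivity and wandering additivity are handled essentially as in the paper (you add a harmless unnecessary multiplicative constant in the wandering case, where in fact the relabelling $\infty_\cF\mapsto\infty_{\cF'}$ is already an injection into $A_n(\cF')\cup A_n(\cF'')$).

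There is, however, a genuine flaw in your construction of the monotonicity map. You define $A_n(\cF')\to A_n(\cF)$ by pure letter substitution: $Y'\mapsto\iota(Y')$ and $\infty_{\cF'}\mapsto\infty_{\cF}$. The first replacement is sound since $x_k\in Y'\subset\iota(Y')$, but the second is not: since $\cup\cF'\subset\cup\cF$, we have $\infty_{\cF'}=X\setminus\cup\cF'\supset\infty_\cF$, so at a position $k$ with $w'_k=\infty_{\cF'}$ the point $f^k(x)$ may well lie in $\cup\cF\setminus\cup\cF'$, and then $f^k(x)\notin\infty_\cF$. The substituted word is therefore not a coding of the orbit of $x$, and there is no reason it should be a coding of \emph{any} orbit, i.e.\ no reason it lies in $A_n(\cF)$. (Your parenthetical worry about ``$Y'\not\subset\cup\cF$'' is aimed at the wrong direction: non-$\infty$ letters of $w'$ always become non-$\infty$ letters of $w$; the problem is at the $\infty$-positions.) The fix is exactly the paper's: do not substitute letters, but rather pick a point $x$ whose orbit segment is coded by $w'$ relative to $\cF'$, and \emph{re-code} that same orbit segment relative to $\cF$, choosing $\Phi(w')\in A_n(\cF)$ arbitrarily among the possible codings. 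Your preimage bound then goes through unchanged: if $w_k=\infty_\cF$ then $f^k(x)\notin\cup\cF\supset\cup\cF'$, forcing $w'_k=\infty_{\cF'}$, so a preimage $w'$ of $w$ is constrained to equal $\infty_{\cF'}$ at all $\infty$-positions of $w$, leaving at most $M(\cup\cF)$ free positions with $\sharp\cF'+1$ choices each.
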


\begin{proof}

To prove the first point, we fix an integer $n$ and define a map $\Phi$ from $\cA_n(\cF')$ to $\cA_n(\cF)$ the following way. Let $w'$ be a word in $\cA_n(\cF')$, we choose some $x$ such that $w'$ is the coding of $x, \dots, f^{n-1}(x)$ relative to $\cF'$, and we choose some coding $\Phi(w')$ of $x, \dots, f^{n-1}(x)$ relative to $\cF$. Let us evaluate the number of inverse images $w'$ of some word $w$ in $\cA_n(\cF)$. The word $w$ codes the orbit of some point $x$ relative to $\cF$. How many possibilities are there for a coding $w'$ of the orbit of $x$ relative to $\cF'$?
The $k$th letter of $w$ is $'\infty'$ exactly when $f^k(x) \not \in \cup \cF$, in which case 
 $f^k(x) \not \in \cup \cF'$ and thus the $k$th letter in $w'$ has to be $'\infty'$ also. On the other hand there are at most $M(\cup \cF)$ letters in $w$ which are distinct from $'\infty'$, and since $w'$ is a word on an alphabet consisting of $\sharp \cF'+1$ letters, this gives at most
 $$
C =  \left( \sharp \cF'+1 \right)^{M(\cup \cF)}
 $$
possibilities for $w'$. We deduce that 
$$
\sharp \cA_n(\cF') \leq C \times \sharp \cA_n(\cF)
$$
and since $C$ does not depend on $n$ this gives the inequality 
$h_{pol}( \cF') \leq h_{pol}( \cF)$ as wanted.

\bigskip

Let us turn to the second point. The first point applies to the families $\cF$ and $\{\cup \cF\}$ and provides the inequality
$$
h_{pol}(\cF) \leq h_{pol}(\cup \cF).
$$
The reverse one comes from the easy inequality (for every $n$)
$$
\sharp \cA_n(\cup \cF) \leq \sharp \cA_n(\cF).
$$

\bigskip

Finally we prove the third point. Applying the first point twice, we get
$$
h_{pol}( \cF) \geq \max( h_{pol}( Y_{1},Y_3, \dots , Y_{L}), h_{pol}( Y_{2},Y_3, \dots , Y_{L})).
$$
Let us prove the reverse inequality. Since $Y_1\cup Y_2$ is wandering, no word in $\cA_n(\cF)$ contains both letters $'Y_1'$ and $'Y_2'$. As a consequence,
$$
\cA_n(\cF) \subset \cA_n(Y_1, Y_3, \dots, Y_L) \cup \cA_n(Y_2, Y_3, \dots, Y_L).
$$
Thus
$$
\begin{array}{rcl}
\sharp A_{n}(\cF) & \leq & \sharp  \cA_n(Y_1, Y_3, \dots, Y_L) +\sharp \cA_n(Y_2, Y_3, \dots, Y_L) \\
 & \leq & 2 \max \left(\sharp  \cA_n(Y_1, Y_3, \dots, Y_L) , \sharp  \cA_n(Y_2, Y_3, \dots, Y_L))\right).
\end{array}
$$ 
which entails the wanted inequality.
\end{proof}

\subsection{Localization}\label{ss.localization}

Let $x_{1}, \dots , x_{L}$ be points in $X \setminus \{\infty\}$. Choose for each $\ell$ a decreasing sequence $(U_{\ell, n})_{n \geq 0}$ which forms a basis of neighborhoods of $x_{\ell}$. By monotonicity (first point of the lemma), the sequence
$$
(h_{pol}(U_{1,n},U_{2,n},  \dots , U_{L,n}))_{n \geq 0}
$$
is decreasing. We denote its limit by $h^{loc}_{pol}(f;x_{1}, \dots , x_{L})$ (often omitting the $f$) and call it  the \emph{local polynomial entropy at $(x_{1}, \dots , x_{L})$}. The monotonicity also entails that this number does not depend on the choice of the sequences of neighborhoods, but only on the $x_{\ell}$'s, as suggested by the notation.
Also note that $h_{pol}(x_{1}, \dots , x_{L})$ depends only on the set $\cS = \{x_{1}, \dots , x_{L}\}$; we will sometime denote it by $h_{pol}(\cS)$.


\begin{prop} (Localization) 
\label{prop.localization}
The polynomial entropy of $f$ is given by the formulae
$$
\begin{array}{rcl}
h_{pol}(f) 
& =& \sup\{h^{loc}_{pol}(f;\cS)\} \\
&=&  \sup \{ h_{pol}(f;Y)\}
\end{array}
$$
where the first supremum is taken among all finite sets $\cS$ of points in $X\setminus \{\infty\}$, 
and the second supremum is taken among all  compact subsets $Y$ of $X\setminus \{\infty\}$. 
\end{prop}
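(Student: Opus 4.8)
Write $A=h_{pol}(f)$, $B=\sup_{\cS}h^{loc}_{pol}(f;\cS)$ (supremum over finite subsets $\cS$ of $X\setminus\{\infty\}$) and $C=\sup_{Y}h_{pol}(f;Y)$ (supremum over compact subsets $Y$ of $X\setminus\{\infty\}$). The plan is to establish $B\le C$, $C\le A$, $A\le C$ and $C\le B$; the first three give $A=C$ and the last two give $B=C$, which is exactly the assertion. The inequality $B\le C$ is immediate from Lemma~\ref{lemm.add-mon}: for a finite set $\cS=\{x_1,\dots,x_L\}$ I would choose neighbourhood bases $(U_{\ell,n})_n$ consisting of sets with compact closure, so that additivity and monotonicity give $h_{pol}(U_{1,n},\dots,U_{L,n})=h_{pol}(\bigcup_\ell U_{\ell,n})\le h_{pol}(\overline{\bigcup_\ell U_{\ell,n}})\le C$, and I pass to the limit in $n$. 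For $C\le A$ I fix a compact $Y$, set $d_0=d(Y,\infty)>0$, $\varepsilon_0=d_0/2$, $K_1=\{x:d(x,\infty)\ge d_0/2\}\supseteq Y$, and choose for each coding $T\in\cA_n(Y)$ a realizing point $x_T$ (identifying $T$ with the subset of $\{0,\dots,n-1\}$ it records). The key remark is that if $x_T$ and $x_{T'}$ stay $\varepsilon_0$-close along the first $n$ iterates, then both $T$ and $T'$ are contained in $S_T:=\{k:f^k(x_T)\in K_1\}$, a set with at most $M(K_1)<\infty$ elements; hence at most $2^{M(K_1)}$ codings are pairwise $\varepsilon_0$-close, and a greedy colouring extracts an $(n,\varepsilon_0)$-separated subfamily of the $x_T$ of size $\ge\sharp\cA_n(Y)/2^{M(K_1)}$. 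Therefore $h_{pol}(f;Y)\le\limsup_n\frac{\log S(n,\varepsilon_0)}{\log n}\le h_{pol}(f)$.

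For $A\le C$, fix $\varepsilon>0$ and cover the compact set $\{x:d(x,\infty)\ge\varepsilon/4\}$ by a finite family $\cF$ of balls of radius small enough that their closures miss $\infty$. If $E$ is $(n,\varepsilon)$-separated, then the map sending $x\in E$ to a coding of its orbit relative to $\cF$ is injective: two orbits sharing a coding are, at every time, either in a common ball of $\cF$ or both within $\varepsilon/4$ of $\infty$, hence always less than $\varepsilon$ apart. Thus $S(n,\varepsilon)\le\sharp\cA_n(\cF)$, and by additivity and monotonicity $\limsup_n\frac{\log S(n,\varepsilon)}{\log n}\le h_{pol}(f;\cF)=h_{pol}(f;\bigcup\cF)\le h_{pol}(f;\overline{\bigcup\cF})\le C$. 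Letting $\varepsilon\to 0$ gives $A\le C$, so $A=C$.

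The remaining inequality $C\le B$ is the real content. Fix a compact $Y$, write $\beta=h_{pol}(f;Y)$ and $M=M(Y)$. Along a subsequence $n_i$ one has $\sharp\cA_{n_i}(Y)\ge n_i^{\beta-o(1)}$; since every coding is a subset of $\{0,\dots,n_i-1\}$ of size at most $M$, after a further extraction I fix $r\le M$ with at least $n_i^{\beta-o(1)}$ codings $T=\{t_1<\dots<t_r\}$ of size exactly $r$, and attach to each such $T$ a realizing orbit and its visit points $(p_1^T,\dots,p_r^T)\in Y^r$. Now, for a fixed scale $s$, cover $Y$ by finitely many sets of diameter $<s$; the number of product ``boxes'' in $Y^r$ does not depend on $n$, so by the pigeonhole principle (and one more extraction) there is a single box $Q_1\times\dots\times Q_r$ containing the visit tuples of at least $n_i^{\beta-o(1)}$ of these codings. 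Running this for $s=1/m$ and extracting a limit $(x_1,\dots,x_r)\in Y^r$ of a point of each box, one shows that $h^{loc}_{pol}(\{x_1,\dots,x_r\})\ge\beta$: for each $s>0$ the orbits selected above pass within $s$ of $x_j$ at time $t_j$, and (for $s$ small) they meet $\bigcup_jB(x_j,s)$, whose closure is compact and avoids $\infty$, a number of times bounded independently of $n$; hence their codings relative to $\{B(x_1,s),\dots,B(x_r,s)\}$ have boundedly many non-$\infty$ letters, from which $\{t_1,\dots,t_r\}$ can be recovered up to a bounded ambiguity. Consequently $\sharp\cA_{n_i}(\{B(x_1,s),\dots,B(x_r,s)\})\ge n_i^{\beta-o(1)}$ for every $s>0$, so $h^{loc}_{pol}(\{x_1,\dots,x_r\})\ge\beta$ and $C\le B$.

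The main obstacle is this last step: one must show that the polynomial growth rate of the number of codings of a compact set is carried by a finite set of points. Two ideas make it work. First, the pigeonhole is performed over a number of boxes that is constant in $n$, so the growth exponent $\beta$ is not lost in the localization. Second, each relevant orbit visits a small neighbourhood of $\{x_1,\dots,x_r\}$ only boundedly often, so replacing $Y$ by small balls around the $x_j$ costs only a bounded multiplicative factor in the number of codings. Finiteness of all these multiplicities, here and in the two comparison steps of the first paragraphs, ultimately rests on the standing hypothesis through the elementary fact that every compact subset of $X\setminus\{\infty\}$ is met only finitely often by any orbit.
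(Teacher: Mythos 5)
Your proof is correct and, in the two substantive inequalities, genuinely different from the paper's. For $h_{pol}(f;Y)\le h_{pol}(f)$ (Lemma~\ref{lem.inequality}), the paper first disjointifies $Y$ into wandering pieces via Sublemma~\ref{sublemma.cut1} and then separates codings within each subfamily; you instead argue directly, noting that any two realizing orbits $x_T,x_{T'}$ that stay $\varepsilon_0$-close in Bowen distance must both code inside the set $S_T=\{k:f^k(x_T)\in K_1\}$ of size at most $M(K_1)$, so a greedy extraction yields an $(n,\varepsilon_0)$-separated family with only the constant factor $2^{M(K_1)}$ lost, and no disjointification is needed. For the inequality $C\le B$, the paper shrinks disjoint wandering compacta to points using Sublemma~\ref{sublemma.cut2} and wandering additivity, while you pigeonhole the visit tuples $(p_1^T,\dots,p_r^T)$ into product boxes of a finite cover of $Y$ and pass to a limit point; both mechanisms hinge on $M(\cdot)$ being finite on compacta so that each reduction costs only a bounded factor, and both are valid. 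Two points should be tightened. In the $A\le C$ step, requiring only that the closures of the balls in $\cF$ avoid $\infty$ is not enough for the coding map to be injective on an $(n,\varepsilon)$-separated set: you also need each ball to have diameter $<\varepsilon$ (radius $\le\varepsilon/8$, say), so that two orbit points sharing a non-$\infty$ letter are indeed $\varepsilon$-close. And in the $C\le B$ step, the successive extractions (over the coding size $r$, the scales $s=1/m$, and the pigeonholed box at each scale) must be nested and the subsequences of $n_i$ diagonalised so that for each $s>0$ one still has a single subsequence carrying the lower count $n_i^{\beta-o(1)}$; you flag this with ``one more extraction'', which is indeed routine, but it is precisely the place where the exponent $\beta$ could silently degrade if the bookkeeping were sloppy.
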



\paragraph{Remarks}
\begin{enumerate}
\item If $\cF=\{Y\}$ and $Y$ is wandering, then, as in the example of section~\ref{ss.coding} we get $\sharp \cA_n(\cF)= n$ and $h_{pol}(f; \cF)=1$.  
Then, according to the proposition, the polynomial entropy of $f$ is at least one.
\item On the other hand, we have the upper bound $h_{pol}(Y)\leq M(Y)$.  Indeed in the coding the letter '$Y$' appears at most $M(Y)$ times, and thus the number of elements of $\cA_n(\cF)$ is bounded by $n^{M(Y)}$. The proof of Proposition~\ref{prop.localization} will show that for every $\varepsilon>0$,  the growth of $S(n,\varepsilon)$ is also at most polynomial. The conclusion is that for wandering dynamics the growth of the number of orbits is always at least linear and at most polynomial.
\item If $\cS$ is a finite set, we can find a collection  $\{U_x, x \in \cS\}$ of wandering neighborhoods of the points of $\cS$. We have 
$$
h_{pol}(\{U_x, x \in \cS\})=h_{pol}(\cup_{x \in \cS} U_x) \leq M(\cup_{x \in \cS} U_x) \leq \sharp \cS
$$
and we deduce that the local polynomial entropy at $\cS$ is less or equal to the number of elements of $\cS$.
\end{enumerate}

\bigskip

The following lemma will provide the lower bound for polynomial entropy, and we need a sublemma for its proof.

\begin{lemm}\label{lem.inequality}
Let $\cF$ be  a finite family of compact subsets of $X \setminus\{\infty\}$. Then 
$$
h_{pol}(f;\cF) \leq h_{pol}(f).
$$
\end{lemm}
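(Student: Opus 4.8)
The plan is to compare, for fixed small $\varepsilon>0$, the cardinality of the coding set $\cA_n(\cF)$ with the separation number $S(n,\varepsilon)$, by showing that two orbits sharing the same $\cF$-coding must be $(n,\varepsilon)$-close once $\varepsilon$ is chosen adapted to $\cF$. First I would use compactness: since $\cup\cF$ is compact and does not contain $\infty$, and since every point of $\cup\cF$ is wandering, I can choose a finite cover of $\cup\cF$ by wandering open sets, refine each element $Y\in\cF$ (by the monotonicity point of Lemma~\ref{lemm.add-mon}, which only increases $h_{pol}$ if I pass to a finer family, so it suffices to treat the finer family) so that each element of the family has diameter less than a prescribed $\varepsilon_0$; actually, more carefully, I would fix $\varepsilon>0$ arbitrary and choose a finite subfamily of small sets whose union still carries the same $h_{pol}$ and whose elements have diameter $<\varepsilon$. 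Then any two orbit segments with the same coding relative to this refined family stay within $\varepsilon$ of each other at every non-$\infty$ time; at the $\infty$-times both points lie in $\infty_\cF$, i.e. outside $\cup\cF$. This last case is the one requiring care.

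The main obstacle is precisely the behaviour at the letter $\infty_\cF$: two points coded by $\infty$ at time $k$ are merely known to be in the (possibly large) set $X\setminus\cup\cF$, so they need not be $\varepsilon$-close there. To handle this I would instead only aim for a weaker but sufficient statement: partition $\cA_n(\cF)$ according to the \emph{positions and values} of the non-$\infty$ letters. There are at most $M(\cup\cF)$ such letters, so a word is determined by a choice of at most $M(\cup\cF)$ positions in $\{0,\dots,n-1\}$ together with a choice of element of $\cF$ for each; hence $\sharp\cA_n(\cF)\le (\sharp\cF+1)^{M(\cup\cF)}\,n^{M(\cup\cF)}$, giving $h_{pol}(\cF)\le M(\cup\cF)$ trivially — but this does not yet give $\le h_{pol}(f)$. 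So the correct route is the first one: I must genuinely embed (a large part of) $\cA_n(\cF)$ into an $(n,\varepsilon)$-separated set.

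Here is the fix for the $\infty_\cF$ issue. Enlarge the family: set $\cF^+ = \cF\cup\{Z\}$ where $Z$ is a compact set obtained as follows. By compactness of $\cup\cF$ and the wandering hypothesis, choose finitely many points $z_1,\dots,z_r$ and wandering neighborhoods so that... — more simply, observe that to separate two orbit segments with the same $\cF$-coding it suffices that they differ at one non-$\infty$ time, and we can choose the sets of $\cF$ small enough (diameter $<\varepsilon$) that \emph{distinct} elements $Y,Y'\in\cF$ occurring at the same position $k$ already force $d(f^k(x),f^k(y))>\varepsilon$ unless $Y\cap Y'\ne\emptyset$; by taking the cover fine enough and then, using monotonicity again, replacing $\cF$ by the family of closures of the atoms of the partition generated by $\cF$ (finitely many, pairwise essentially disjoint, each of diameter $<\varepsilon$), I get that two orbits with the same refined coding are $(n,\varepsilon)$-close. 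Pick one representative orbit-point per word in $\cA_n(\cF^{\mathrm{ref}})$; this set of representatives is $(n,\varepsilon)$-separated, so $S(n,\varepsilon)\ge \sharp\cA_n(\cF^{\mathrm{ref}})$.

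Finally I would assemble the estimate: for the chosen $\varepsilon$,
\[
h_{pol}(f;\cF) \;\le\; h_{pol}(f;\cF^{\mathrm{ref}})
\;=\; \limsup_{n\to\infty}\frac{\log\sharp\cA_n(\cF^{\mathrm{ref}})}{\log n}
\;\le\; \limsup_{n\to\infty}\frac{\log S(n,\varepsilon)}{\log n},
\]
where the first inequality is monotonicity (Lemma~\ref{lemm.add-mon}, since each element of $\cF$ is a finite union of closures of atoms, so after one more application of additivity and monotonicity $h_{pol}(\cF)\le h_{pol}(\cF^{\mathrm{ref}})$). Letting $\varepsilon\to0$ in the rightmost term yields $h_{pol}(f;\cF)\le h_{pol}(f)$, which is the claim. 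The only genuinely delicate point, to be expanded in the full proof, is the construction of $\cF^{\mathrm{ref}}$ with diameters $<\varepsilon$ and the verification that equal refined codings imply $(n,\varepsilon)$-closeness including at the $\infty$-letter — which holds because $\infty_{\cF^{\mathrm{ref}}}=\infty_\cF$ is a single fixed set and we never need to separate two points both lying in it, as such orbit-pairs are already separated at some non-$\infty$ position (or else coincide, having been assigned the same representative).
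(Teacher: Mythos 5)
Your plan correctly identifies the target (inject codings into an $(n,\varepsilon)$-separated set) and correctly flags the troublesome letter $\infty_\cF$, but the proposed resolution does not hold up, and two devices used in the paper's proof are missing. The central claim you need is: \emph{if two words of $\cA_n(\cF^{\mathrm{ref}})$ are distinct, the corresponding orbit points are $(n,\varepsilon)$-separated}. Shrinking the sets and taking them pairwise disjoint does \emph{not} give this. Even in the ``good'' case where $w_i\neq z_i$ with both letters non-$\infty$, two disjoint small compact sets can be arbitrarily close to each other, so $d(f^i(x),f^i(y))>\varepsilon$ does not follow from disjointness and small diameter. Worse, the $\infty$ vs.\ non-$\infty$ case remains genuinely open: take $\underline{w}=(Y_1,\infty,\dots)$ and $\underline{z}=(\infty,Y_1,\dots)$ agreeing elsewhere. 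Every index where they disagree has one letter $\infty$; at time $0$, $f^0(y)\in\infty_\cF=X\setminus\cup\cF$, which may contain points as close to $Y_1$ as you like. Your closing sentence (``such orbit-pairs are already separated at some non-$\infty$ position'') is therefore false as stated; this is exactly the gap, not a detail to be expanded.

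The paper closes it with two ingredients you should add. First, after reducing to pairwise disjoint wandering compacts $Y_1,\dots,Y_L$, choose pairwise disjoint wandering neighborhoods $U_\ell\supset Y_\ell$ and take $\varepsilon<\min_\ell d(Y_\ell, X\setminus U_\ell)$. This choice — tying $\varepsilon$ to the gap between each $Y_\ell$ and the boundary of its neighborhood, not merely to the diameters — is what makes distinct non-$\infty$ letters force $\varepsilon$-separation. Second, partition $\cA_n(\cF)$ into the pieces $\cA_n(\cF,\cF')$ of words whose letter set is \emph{exactly} $\cF'\cup\{\infty_\cF\}$. Within one such piece, if $w_i=\infty_\cF$ and $z_i=Y_\ell$, the word $\underline{w}$ must also use $Y_\ell$ at some $j\neq i$, so $f^j(x)\in Y_\ell\subset U_\ell$; since $U_\ell$ is wandering, $f^i(x)\notin U_\ell$, hence $d(f^i(x),f^i(y))>\varepsilon$. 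This yields $\sharp\cA_n(\cF,\cF')\le S(n,\varepsilon)$ for each of the $2^L$ pieces, and the constant $2^L$ disappears in the $\log/\log n$ limit. Without the letter-set partition and the wandering-neighborhood choice of $\varepsilon$, the separation claim simply fails. (A smaller point: the ``closures of atoms'' trick you use to obtain a disjoint refined family need not produce disjoint sets, since closures of disjoint atoms can overlap; the paper's Sublemma~\ref{sublemma.cut1} instead runs an explicit elimination algorithm using wandering additivity.)
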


\begin{sublemm}\label{sublemma.cut1}
For every compact subset $Y$ of $X \setminus \{\infty\}$, and every $\varepsilon>0$ there exists $Y_1, \dots ,Y_L$ disjoint compact subsets of $Y$ with diameters less than $\varepsilon$, such that 
$$
h_{pol}(f;\{ Y_1, \dots ,Y_L \})  = h_{pol}(f;Y).
$$
\end{sublemm}

\begin{proof}[Proof of the sublemma]
By hypothesis every point of $Y$ has a wandering compact neighborhood, and by compactness, up to disminishing $\varepsilon$, every subset of $Y$ of diameter less than  $2\varepsilon$  is wandering. Again by compactness there is a finite cover $\{ Y_1, \dots Y_L\}$ of $Y$ by compact subsets of diameters less than $\varepsilon$. Then for every $i,j \in \{1, \dots, L\}$, if $Y_i$ meets $Y_j$ then $Y_i \cup Y_j$ is a wandering set.

We now apply the following algorithm to produce a decreasing finite sequence $\cF_0, \dots, \cF_n$ of families of subsets of $Y$ with the same polynomial entropy. We start with $\cF_0 = \{ Y_1, \dots Y_L\}$. At step $k$ we have a family $\cF_k$. If its elements are pairwise disjoint then we stop. Otherwise we select two distinct elements $Y_i, Y_j$ in $\cF_k$ which intersects. By construction their union is wandering. Then by wandering additivity (last point of lemma~\ref{lemm.add-mon}) we can choose either $\cF_{k+1} = \cF_k \setminus \{ Y_i\}$ or $\cF_{k+1} = \cF_k \setminus \{ Y_j\}$ so that 
$$
h_{pol}(f;\cF_{k+1}) = h_{pol}(f;\cF_{k}).
$$
 
The algorithm produces a subfamily $\cF_n = \{Y_{i_1}, \dots, Y_{i_{L-n}}\}$ of compact subsets of $Y$ which have diameters less than $\varepsilon$, are pairwise disjoint, and such that 
$$
h_{pol}(f;\cF_n) = h_{pol}(f;Y).
$$
\end{proof}

\begin{proof}[Proof of the lemma]
By additivity we have  $h_{pol}( \cup \cF) = h_{pol}( \cF)$. Thus it suffices to prove that 
$h_{pol}(Y) \leq h_{pol}(f)$ when $Y$ is a compact subset of $X \setminus\{\infty\}$.
By the sublemma there exists a family $\cF$ whose elements  are disjoint and wandering, and whose polynomial entropy equals that of $Y$. It remains to prove that 
$$
h_{pol}(f;\cF) \leq h_{pol}(f).
$$
Choose some disjoint wandering respective neighborhoods $U_1, \dots, U_L$  of the elements $Y_1, \dots, Y_L$ of $\cF$. 
Let $\varepsilon>0$ be smaller, for every $\ell=1, \dots, L$, than the distance from $Y_\ell$ to the complement of $U_\ell$.
Fix some positive integer $n$.  
For every $\cF' \subset \cF$, let $\cA_n(\cF,\cF')$ denote the set of elements of $\cA_n(\cF)$ whose set of letters is exactly $\cF' \cup \{\infty_\cF\}$.  We fix some $\cF' \subset \cF$, and we
consider two points $x,y$ in $X$ and two words  $\underline{w} = (w_{0}, \dots , w_{n-1}),\underline{z} = (z_{0}, \dots , z_{n-1})$ in $\cA_n(\cF,\cF')$ which represent respectively  the orbits $(x, \dots , f^{n-1}(x))$ and $(y, \dots , f^{n-1}(y))$. 
Then we claim that if the symbols $\underline{w}$ and $\underline{z}$ are distinct the points $x$ and $y$ are $(n,\varepsilon)$-separated. Indeed, let $i \in \{0, \dots, n-1\}$ be such that $w_i \neq z_i$. If both $w_i \neq \infty, z_i \neq \infty$ then $f^i(x)$ and $f^i(y)$ belongs to distinct sets $Y_{i}$'s, and these are more than $\varepsilon$ apart. 
If, say, $w_i=\infty$ then $f^i(y) \in Y_{z_i}$ and $f^i(x) \not \in Y_{z_i}$. By definition of $\cA_n(\cF,\cF')$ there exists some $j \neq i$ in $\{0, \dots, n-1\}$ such that $f^j(x) \in Y_{z_i} \subset U_{z_i}$. Since $U_{z_i}$ is wandering, we get that $f^i(x) \not \in U_{z_i}$, thus again 
$f^i(x)$ and $f^i(y)$ are more than $\varepsilon$ apart, and the claim is proved.
 As an immediate consequence we get that $\sharp \cA_n(\cF,\cF') \leq S(n, \varepsilon)$.
Since the $\cA_n(\cF,\cF')$'s form a partition of $\cA_n(\cF)$ into $2^L$ elements we get
$$
\sharp \cA_n(\cF) \leq 2^L S(n, \varepsilon)
$$
 and thus
$$
h_{pol}(f; \cF) \leq \limsup_{n \to \infty} \frac{\log S(n, \varepsilon)}{\log (n)}.
$$
Since this inequality is valid for every small enough $\varepsilon$ we finally get 
$$
h_{pol}(f; \cF) \leq h_{pol}(f).
$$

\bigskip

\end{proof}

\begin{proof}[Proof of Proposition~\ref{prop.localization}]

The lemma entails at once that $h_{pol}(f)$ is larger or equal to the other terms appearing in the Proposition.
To prove the reverse inequalities, we first look for a compact set disjoint from $\infty$ with a high polynomial entropy.
We fix $\varepsilon >0$.
Let $\{Y_1, \dots , Y_L\}$ be  a family of (a priori non disjoint) subsets of $X \setminus \{\infty\}$ with diameters less than $\varepsilon$ and such that $Y_\infty = X \setminus ( Y_1 \cup \dots \cup Y_L )$ also has diameter less than $\varepsilon$. We fix a positive integer $n$, and
 consider some $(n,\varepsilon$)-separated set $E$. For every point $x$ in $E$, choose some coding $\underline{\alpha}(x)$ of the sequence $(x, f(x), \dots, f^{n-1}(x))$ with respect to the family $\{Y_1, \dots , Y_L\}$. Since $E$ is $(n,\varepsilon$)-separated and the sets $Y_1, \dots , Y_L, Y_\infty$ have diameters less than $\varepsilon$, the map $\underline{\alpha}$ from $E$ to the set of words  is one to one.
Thus 
$$
S(n,\varepsilon) \leq \sharp \cA_n(\{Y_1, \dots , Y_L\}.
$$
Dividing by $\log(n)$ and letting $n$ go to infinity we get
$$
\limsup_{n \to \infty} \frac{\log S(n,\varepsilon)}{\log (n)}  \leq 
h_{pol}(Y_{1}, \dots , Y_{L}) = h_{pol}(Y) 
$$
with $Y=Y_{1} \cup \dots \cup Y_{L}$ by additivity.
Letting $\varepsilon$ go to $0$ we get
$$
h_{pol}(f) \leq  \sup \{ h_{pol}(Y) , Y \text{ compact subset of } X\setminus NW(f) \}
$$
which is the first equality of the proposition.

\bigskip

Let us prove the second equality. Let $\alpha>0$. We want to find $L$ points $x_1, \dots x_L$ whose local polynomial entropy is larger than $h_{pol}(f)-\alpha$.
By  the first part of the proof of the proposition, there is a compact set $Y$, not containing $\infty$, whose polynomial entropy is larger than $h_{pol}(f)-\alpha$.
By Sublemma~\ref{sublemma.cut1}  there exists a family $Y_1, \dots , Y_L$ of disjoint wandering compact subsets of $X$ whose polynomial entropy equals that of $Y$.

\begin{sublemm}\label{sublemma.cut2}
For every $\eta>0$ there exists $Y'_1, \dots, Y'_L$ respectively included in $Y_1, \dots, Y_L$, with diameters less than $\eta$ and such that 
$$
h_{pol}(f; Y'_{1}, \dots , Y'_{L})= h_{pol}(f; Y_{1}, \dots , Y_{L}).
$$
\end{sublemm}	
\begin{proof}
Cut the $Y_i$'s into small pieces and apply wandering additivity.
\end{proof}
The sublemma permits to find decreasing sequences $(Y_i^n)_{n \geq 0}$, $i=1 \dots L$ with $Y_i^0 = Y_i$, whose diameter tends to $0$, and such that $h_{pol}(f; Y_{1}^n, \dots , Y_{L}^n)$ does not depend on $n$ (and thus is still larger than $h_{pol}(f)-\alpha$).
Denote by $x_1, \dots x_L$ the respective limit points, 
$$
\{x_i\} = \bigcap_n Y_i^n.
$$
For every neighborhoods $U_1, \dots , U_L$ of $x_1, \dots, x_L$, choose $n$ large enough so that $Y_i^n \subset U_i$ for every $i$. Then by monotonicity we get 
$$
h_{pol}(f; U_{1}, \dots , U_{L}) \geq h_{pol}(f; Y_{1}^n, \dots , Y_{L}^n) > h_{pol}(f)-\alpha.
$$
Thus $h^{loc}_{pol}(f; x_{1}, \dots , x_{L})$ is larger or equal to $h_{pol}(f)-\alpha$. This completes the proof.
\end{proof}

\bigskip
We end the section by the following useful lemma on local entropy.

\begin{lemm}\label{lemm.iterates}
For every points $x_i, \dots, x_L$ in $X \setminus\{\infty\}$,
$$
h^{loc}_{pol}(f;x_{1}, \dots , x_{L}) = h^{loc}_{pol}(f;f^{n_{1}}x_{1}, \dots , f^{n_{L}}x_{L}).
$$
\end{lemm}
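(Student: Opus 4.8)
The plan is to reduce the statement to a comparison of the families of codings, exploiting that $f$ is a homeomorphism, and then to compare the numbers of words up to a multiplicative factor independent of $N$.

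First I would note that, since $f$ fixes $\infty$ and is a homeomorphism, each $f^{n_\ell}x_\ell$ is again a wandering point different from $\infty$, and that if $(U_{\ell,k})_{k\geq 0}$ is a decreasing basis of \emph{wandering} neighbourhoods of $x_\ell$ — such a basis exists because $x_\ell$ is wandering — then $(f^{n_\ell}(U_{\ell,k}))_{k\geq 0}$ is a decreasing basis of wandering neighbourhoods of $f^{n_\ell}x_\ell$. In view of the definition of $h^{loc}_{pol}$ (and of its independence of the chosen bases), the lemma then reduces to the following claim: for every finite family $\cG=\{Y_1,\dots,Y_L\}$ of nonempty wandering subsets of $X\setminus\{\infty\}$ and all integers $m_1,\dots,m_L$, writing $\cG'=\{f^{m_1}(Y_1),\dots,f^{m_L}(Y_L)\}$,
\[
h_{pol}(f;\cG')=h_{pol}(f;\cG).
\]
Here $M(\cup\cG)\leq L$ and $M(\cup\cG')\leq L$, since any orbit meets a wandering set at most once.

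To prove the claim I would set $m=\max_\ell|m_\ell|$ and construct a map $\Phi\colon\cA_N(\cG)\to\cA_{N+2m}(\cG')$ as follows: given $w\in\cA_N(\cG)$, pick $x$ such that $w$ is a coding of $(x,\dots,f^{N-1}x)$ relative to $\cG$, and let $\Phi(w)$ be any coding, relative to $\cG'$, of the orbit segment $(f^{-m}x,\dots,f^{N+m-1}x)$ of length $N+2m$. The elementary point is that $f^kx\in Y_\ell$ if and only if the term of index $k+m+m_\ell$ of this longer segment belongs to $f^{m_\ell}(Y_\ell)$, and that $k\mapsto k+m+m_\ell$ maps $\{0,\dots,N-1\}$ into $\{0,\dots,N+2m-1\}$. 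Therefore $\Phi(w)$ determines, for all $k\in\{0,\dots,N-1\}$ and all $\ell$, whether $f^kx\in Y_\ell$, \emph{except} possibly at the indices lying above the at most $L$ positions where $\Phi(w)$ is not $\infty_{\cG'}$ — the only ambiguity there being that the $Y_\ell$'s may overlap — and from this incidence data one recovers $w$ up to one further bounded choice, again caused by possible overlaps. Hence $\Phi$ is at most $C$-to-one with $C$ depending only on $L$, so $\sharp\cA_N(\cG)\leq C\,\sharp\cA_{N+2m}(\cG')$; the symmetric construction (with $\cG$ and $\cG'$ exchanged and the $m_\ell$ replaced by $-m_\ell$) gives $\sharp\cA_N(\cG')\leq C\,\sharp\cA_{N+2m}(\cG)$. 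Dividing by $\log N$, using $\log(N+2m)/\log N\to 1$ and the fact that a $\limsup$ does not change under a fixed shift of the index, we obtain $h_{pol}(f;\cG)=h_{pol}(f;\cG')$, which is the claim.

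I expect the only delicate part to be the bounded-multiplicity count in the previous paragraph: one has to fix once and for all a single realizing point for each word and a single coding for each orbit segment, and then verify that reconstructing $w$ from $\Phi(w)$ costs only a factor depending on $L$ (and on $M(\cup\cG),M(\cup\cG')\leq L$), not on $N$. The underlying idea — that translating by $f^{n_\ell}$ alters a coding only near the two ends of the time window, and through the non-uniqueness of codings of overlapping sets — is simple. If one wishes to eliminate the overlap bookkeeping one may first invoke wandering additivity (Lemma~\ref{lemm.add-mon}) to shrink the neighbourhood bases to pairwise disjoint ones; but since the lemma is stated for arbitrary tuples of points (possibly with coincidences among the $x_\ell$ or among the $f^{n_\ell}x_\ell$), it seems cleaner to argue directly as above.
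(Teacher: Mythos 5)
Your proof is correct, and it takes a genuinely different route from the paper's. The paper first establishes a preliminary formula --- that inserting an iterate $f^{n_1}(x_1)$ into the list of points does not change the local entropy --- by building an \emph{injective} map $\cA_n(U_1,f^{n_1}(U_1),U_2,\dots,U_L)\to\cA_{n+n_1}(U_1,\dots,U_L)$, injectivity being ensured by choosing pairwise disjoint neighbourhoods so that codings are unique. Via that formula the paper reduces to the case where the orbits of the $x_j$'s are pairwise disjoint, then proves the lemma by shifting a single point by a single step (again via an injective map between coding sets), inducting, and applying the argument to $f^{-1}$ for the reverse inequality. You instead shift all $L$ points simultaneously by arbitrary amounts and do not insist on disjoint neighbourhoods; the price is that codings need not be unique, so your comparison map $\Phi\colon\cA_N(\cG)\to\cA_{N+2m}(\cG')$ is only finite-to-one rather than injective. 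Your multiplicity bookkeeping, which you rightly identified as the crux, does check out: any preimage $w$ of a fixed $w'$ has all its non-$\infty_\cG$ letters at positions $k$ for which $w'_{k+m+m_\ell}\neq\infty_{\cG'}$ for some $\ell$, and since $w'$ has at most $M(\cup\cG')\leq L$ non-$\infty$ letters there are at most $L^2$ candidate positions $k$, each admitting at most $L+1$ letters, so $\Phi$ is at most $(L+1)^{L^2}$-to-one with a constant independent of $N$ and of the $m_\ell$. The two approaches trade off: the paper's stepwise reduction keeps each coding map injective but needs the preliminary formula as scaffolding, whereas your single direct construction is shorter and avoids the reduction to disjoint orbits, at the cost of the more delicate bounded-multiplicity count.
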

\begin{proof}
We first check the preliminary formula
$$
h^{loc}_{pol}(f;x_1, f^{n_1}(x_{1}), x_2, \dots , x_{L}) = h^{loc}_{pol}(f;x_{1}, \dots , x_{L}).
$$
The right-hand side is no more than the left-hand side by monotonicity.
If $f^{n_1}(x_1)$ is equal to some $x_j$ then the equality is obvious.
Likewise, we may assume that the $x_j$'s are all distincts.
Choose some pairwise disjoint neighborhoods $U_1, f^{n_1}(U_1), U_2, \dots U_L$ of $x_1, f^{n_1}(x_{1}), x_2, \dots , x_{L}$. 
We define a map 
$$\Phi: \cA_{n}(U_1, f^{n_1}(U_1), \dots, U_L) \to \cA_{n+n_1}(U_1, \dots, U_L)$$
as follows.
Given a word $w \in \cA_{n}(U_1, f^{n_1}(U_1), \dots, U_L)$, we choose $x$ such that $w$ is the coding of $x, \dots, f^{n-1}(x)$, and then we define $\Phi(w)$ to be the coding of $f^{-n_1}(x), \dots, f^{n-1}(x)$ relative to $\cA_{n+n_1}(U_1, \dots, U_L)$. This map is one-to-one: indeed the $n$ last letters of $\Phi(w)$ coincide with $w$, with maybe the exception of one letter which was $f^{n_1}(U_1)$ in $w$ and that has been transformed into the letter $\infty$ in $\Phi(w)$ (since $f^{n_1}(U_1)$ is disjoint from the $U_i$'s); this change may be detected by looking if there is some letter $U_1$ in the $n$ first letters of $\Phi(w)$, which proves injectivity. This shows that 
$$
\sharp \cA_{n}(U_1, f^{n_1}(U_1), U_2, \dots, U_L) \leq \sharp \cA_{n+n_1}(U_1, \dots, U_L)
$$
and thus we get $h^{loc}_{pol}(f;x_1, f^{n_1}(x_{1}), x_2, \dots , x_{L}) \leq h^{loc}_{pol}(f;x_{1}, \dots , x_{L})$ as claimed. Note that, by a straightforwad induction, the proven formula entails more generally that the polynomial entropy of a finite set $\cF$ do not change when we add to $\cF$ finitely many points that are in the orbit of the points of $\cF$.

Now we prove the lemma. Let $x_1, \dots, x_L$ be points in $X \setminus \{\infty\}$.
By the preliminary formula we may assume that the orbits of the $x_j$'s are pairwise disjoint.
To get the formula of the lemma, by induction  it suffices to prove the easier formula $h^{loc}_{pol}(f;x_{1}, \dots , x_{L})=h^{loc}_{pol}(f;f(x_{1}), x_2, \dots , x_{L})$.

Choose some pairwise disjoint neighborhoods $U_1, \dots U_L$ of $x_1,  \dots , x_{L}$ such that $f(U_1)$ is also disjoint from $U_1, \dots, U_L$.
A similar argument as in the proof of the preliminary formula shows that
 $\sharp \cA_{n}(f(U_1), U_2, \dots, U_L) \leq \sharp \cA_{n+1}(U_1, \dots, U_L)$. 
  Thus we get $h^{loc}_{pol}(f;f(x_{1}), \dots , x_{L}) \leq h^{loc}_{pol}(f;x_{1}, x_2, \dots , x_{L})$, and also  the reverse inequality by applying this one  $f^{-1}$ (noting that all the quantities we have defined so far do not change when we turn $f$ into $f^{-1}$).
\end{proof}

\subsection{Polynomial entropy and the singular set}\label{ss.singular-set}
We say that the subsets $U_{1}, \dots, U_{L}$ of $X \setminus \{\infty\}$ are \emph{mutually singular} if for every $M$ there exists a point $x$ and times $n_{1}, \dots , n_{L}$ such that $f^{n_{i}} (x) \in U_{i}$ for every $i$, and 
$\left| n_{i}-n_{j} \right|> M$ for every $i \neq j$.
We say that a finite subset $\{x_{1}, \dots , x_{L} \}$ of $X  \setminus \{\infty\}$ is \emph{singular} if every family of respective neighborhoods $U_{1}, \dots U_{L}$ of $x_1, \dots, x_L$
are mutually singular. The reader can easily check  that if $U_{1}, \dots, U_{L}$ are compact subsets of $X$ which are mutually singular, then there exists a singular set $\{x_{1}, \dots , x_{L} \}$ with $x_{i} \in U_{i}$ for every $i$. Also note that a singleton is always singular.
The following proposition says that polynomial entropy always comes from singular sets.
\begin{prop}\label{prop.singulier}
Let $\cS$ be a finite subset of $X \setminus \{\infty\}$. Then
$$
h^{loc}_{pol}(f; \cS) =\max\{h^{loc}_{pol}(f; \cS') , \cS' \subset \cS \text{ and } \cS' \text{ is singular}\}.
$$
\end{prop}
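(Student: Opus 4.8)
The plan is to establish the two inequalities separately. The inequality $\ge$ is immediate from monotonicity: if $\cS'\subseteq\cS$, then choosing a single decreasing basis of neighborhoods $(U_{\ell,n})$ for the points of $\cS$ and applying Lemma~\ref{lemm.add-mon}(1) to the sub-family indexed by $\cS'$ gives $h_{pol}(f;\{U_{\ell,n}:x_\ell\in\cS'\})\le h_{pol}(f;\{U_{\ell,n}:x_\ell\in\cS\})$ for every $n$, and letting $n\to\infty$ yields $h^{loc}_{pol}(f;\cS')\le h^{loc}_{pol}(f;\cS)$. Taking the maximum over singular $\cS'$ gives one half of the statement.

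For the reverse inequality I would argue by induction on $L=\sharp\cS$, the case $L=1$ being trivial since singletons are singular. If $\cS$ is itself singular there is nothing to prove, so assume it is not. Since every singular subset of a proper subset $\cT\subsetneq\cS$ is a singular (necessarily proper, as $\cS$ is not singular) subset of $\cS$, and conversely every singular subset of $\cS$ is of this form, the inductive hypothesis applied to all $\cT\subsetneq\cS$ reduces the problem to proving the bound
$$h^{loc}_{pol}(f;\cS)\ \le\ \max_{\cT\subsetneq\cS}h^{loc}_{pol}(f;\cT).$$

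To prove this reduction, write $\cS=\{x_1,\dots,x_L\}$. Since $\cS$ is not singular, some family of respective neighborhoods $U_1,\dots,U_L$ of $x_1,\dots,x_L$ fails to be mutually singular: there is an integer $M_0$ such that no orbit visits all of $U_1,\dots,U_L$ at times pairwise more than $M_0$ apart. For $m$ large, select from the fixed bases pairwise disjoint wandering neighborhoods $V_\ell=U_{\ell,m}\subseteq U_\ell$; then each $V_\ell$ meets any orbit at most once, so each orbit segment has a \emph{unique} coding relative to $\cF:=\{V_1,\dots,V_L\}$. I then split $\cA_n(\cF)$ into the set $\cB_n$ of words using all $L$ letters and its complement, and bound each part. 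A word in the complement uses only letters indexed by some $J\subsetneq\{1,\dots,L\}$, hence (the $V_\ell$ being disjoint) is literally a coding relative to $\{V_\ell:\ell\in J\}$; so the complement has at most $\sum_{J\subsetneq\{1,\dots,L\}}\sharp\cA_n(\{V_\ell:\ell\in J\})$ elements. For $w\in\cB_n$ coding the orbit of some $x$, let $q_\ell$ be the unique position carrying the letter $V_\ell$; then $f^{q_\ell}(x)\in U_\ell$ for all $\ell$, so by the choice of $M_0$ there is a pair $i\ne j$ with $|q_i-q_j|\le M_0$. Replacing the letter at position $q_j$ by the ``$\infty$'' symbol produces a coding $w'$ relative to $\{V_\ell:\ell\ne j\}$, and $w$ is recovered from $w'$ together with the bounded data $(i,j,q_j-q_i)$ (locate $q_i$ in $w'$ as the position of $V_i$, then reinsert the letter $V_j$ at $q_i+(q_j-q_i)$). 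Hence $w\mapsto(i,j,q_j-q_i,w')$ is injective and $\sharp\cB_n\le C_0\max_{j}\sharp\cA_n(\{V_\ell:\ell\ne j\})$ for some constant $C_0=C_0(L,M_0)$. Combining the two bounds, dividing by $\log n$ and letting $n\to\infty$ gives $h_{pol}(f;\cF)\le\max_{\sharp J=L-1}h_{pol}(f;\{V_\ell:\ell\in J\})$; letting finally $m\to\infty$ (each $V_\ell$ shrinking to $x_\ell$, and a finite maximum commuting with the limit) gives the claimed reduction.

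The step that deserves care — rather than the one requiring an idea — is the injectivity of this ``delete a letter'' map, which is exactly why I insist on neighborhoods that are simultaneously pairwise disjoint and wandering: then each symbol occurs at most once, the position of the deleted symbol $V_j$ is pinned down by the retained symbol $V_i$ and the offset $q_j-q_i$, and the reduced word re-read in the smaller family recovers everything else. The only substantive input is the defining property of non-singularity, which forces two of the $L$ return times of a full word to lie within $M_0$ of each other and so enables the passage from $L$ points to $L-1$.
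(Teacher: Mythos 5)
Your proof is correct and follows essentially the same route as the paper's: in both, non-singularity of $\cS$ produces a bounded window $M_0$ within which two of the $L$ visit times of any orbit hitting all the chosen pairwise-disjoint wandering neighborhoods must fall, and the resulting ``drop one of those two letters and remember the bounded offset'' map has fibers of size $O(M_0)$, reducing the count of full-alphabet codings to codings by an $(L-1)$-subfamily; together with the trivial bound on words missing a letter and induction on $\sharp\cS$, this gives the reverse inequality, the direct one being monotonicity. The differences are purely organizational: the paper partitions $\cA_n(\cF)$ and uses a pigeonhole/subsequence-extraction argument to exhibit a single proper subfamily with \emph{equal} entropy before inducting, whereas you bound the two pieces of $\cA_n(\cF)$ directly and pass to the $m\to\infty$ limit via the commutation of a finite maximum with decreasing limits --- a marginally tidier presentation of the same computation.
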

Here are two easy consequences. Firstly, the polynomial entropy of a finite set is no more than the maximal number of elements of  a singular subset (see the third remark after Proposition~\ref{prop.localization}). Secondly, in the localization formula
$$
h_{pol}(f) = \sup\{h^{loc}_{pol}(f;x_{1}, \dots , x_{L})\}
$$
of Proposition~\ref{prop.localization} one can restrict the supremum to the polynomial entropy of singular finite sets of points, and in particular $h_{pol}(f)$ is bounded by the maximal number of elements of a singular set.

\paragraph{Example}
Consider as in section~\ref{ss.coding} the linear map $A : (x,y) \mapsto (2x, y/2)$. To fit our setting, let $\hat A$ denote the map induced by $A$ on the quotient space obtained from $\bbR^2 \cup\{\infty\}$ by identifying $0$ and $\infty$, so that $\hat A$ has a single non wandering point. The singular sets are exactly the sets made of two points distinct from the origin, one on the '$x$' axis and the other one on the '$y$' axis. In particular we see that the polynomial entropy of $A$ is less or equal to $2$. On the other hand in section~\ref{ss.coding} we have found two disks $Y_1, Y_2$ whose polynomial entropy equals $2$, so by Lemma~\ref{lem.inequality} the polynomial entropy of $\hat A$ is equal to $2$. The polynomial entropy of $A$ is also equal to $2$: indeed, the results of the paper may easily be extended to the case when the non-wandering set is finite (the only place where there is something to check is the proof of Proposition~\ref{prop.localization}). This extended setting could be used, for instance, to compute the polynomial entropy of Morse-Smale systems.

\begin{proof}[Proof of the Proposition]

Let $\cS$ be a finite subset of $X \setminus \{\infty\}$. Assuming $\cS$ is not singular, we will show that there exists a proper subset $\cS'$ of $\cS$ whose polynomial entropy equals that of $\cS$. The proposition follows immediately by a finite backward induction, since a singleton is always singular.

We assume that $\cS=\{x_1, \dots , x_L\}$ is not singular, and we consider a family $\cF$ of wandering pairwise disjoint respective neighborhoods $U_1, \dots , U_L$ of the $x_i$'s which are not mutually singular. Let $n$ be a positive integer. We look for a proper subset of $\cF$ whose polynomial entropy equals that of $\cF$.
Like in the proof of Lemma~\ref{lem.inequality}, for every subset $\cF'$ of $\cF$ we denote by $\cA_n(\cF;\cF')$ the set of elements of $\cA_n(\cF)$ whose set of letters is exactly $\cF'\cup\{\infty\}$; in particular, the elements of $\cA_n(\cF; \cF)$ uses all the letters. Also note that since the $U_i$'s are wandering, each letter but $\infty$ appears at most once. We have a partition
$$
\cA_n(\cF) = \bigcup_{\cF' \subset \cF} \cA_n(\cF; \cF').
$$
 Since the $U_i$'s are not mutually singular, there exists a number $M$ such that 
if an orbit encounters all the $U_i$'s, then it encounters two of them with a difference of time at most $M$. For every $i \neq j \in \{1, \dots, L\}$, denote by $\cA_n(\{i,j\})$ the set of elements $\underline{w}$ of $\cA_n(\cF;\cF)$ such that the letters $U_i$ and $U_j$ appear at places at most $M$ appart.
$$
\cA_n(\cF;\cF) = \bigcup_{(i,j)} \cA_n(\{i,j\}) \ \ \text{ and } \ \ 
\cA_n(\cF) = \bigcup_{\cF' \subsetneq \cF} \cA_n(\cF; \cF') \cup \bigcup_{(i,j)} \cA_n(\{i,j\}).
$$
Let $C = (2^L-1)+L(L-1)/2$.
Among the $C$ sets $\cA_n(\cF;\cF')$ for $\cF' \subsetneq \cF$ and $\cA_n(\{i,j\})$ for $i\neq j$, appearing in the righthand side of the last equality, at least one has cardinality at least $\frac{\sharp \cA_n(\cF)}{C}$. Furthermore, the set of indices is finite and independent of $n$.
Thus the polynomial entropy of $\cF$ comes from at least one of these sets; that is, at least one of the two following cases happens:
\begin{enumerate}
\item There exists a proper subset $\cF'$ of $\cF$ such that 
$$
(1) \ \ \ h_{pol}(\cF)=  \limsup_{n \to +\infty} \frac{\log(\sharp\cA_n(\cF;\cF'))}{\log(n)};
$$

\item There exists $(i,j)$ with $i \neq j$ such that 
$$
(2) \ \ \ h_{pol}(\cF) = \limsup_{n \to +\infty} \frac{\log(\sharp\cA_n(\{i,j\}))}{\log(n)}.
$$
\end{enumerate}

Let us examine the first possibility. We have the obvious inclusion, for every $n$,
$$
\cA_n(\cF;\cF') \subset \cA_n(\cF')
$$
and thus equality (1) implies that $h_{pol}(\cF) \leq h_{pol}(\cF')$. Besides the reverse inequality holds by monotonicity, so we get $h_{pol}(\cF) = h_{pol}(\cF')$.

Now assume the second possibility holds. Let $n$ be a positive  integer. If $\underline{w}$ is an element of $\cA_n(\{i,j\})$, let $\underline{w}'$ be obtained from $\underline{w}$ by changing the letter '$U_i$', that appears exactly once in $\underline{w}$, into '$\infty$'. Since the $U_k$'s are pairwise disjoint, $\underline{w}'$ is an element of $\cA_n(\cF')$, where $\cF'=\cF\setminus\{U_i\}$. The word $\underline{w}$ also contains the letter '$U_j$', and the letter '$U_i$' is at most $M$ places appart: thus $\underline{w}'$ has at most ${2M}$ inverse images under  the map $\underline{w} \mapsto \underline{w}'$. We get that 
$$
\cA_n(\{(i,j\})  \leq 2M \sharp \cA_n(\cF').
$$
Letting $n$ go to $+\infty$ and using (2), we conclude as in the first case that 
$h_{pol}(\cF) \leq  h_{pol}(\cF')$, and thus $h_{pol}(\cF) = h_{pol}(\cF')$ by monotonicity.

Choose for each $i$ a decreasing sequence $(V_i(k))_{k \geq 0}$ which forms a basis of neighborhoods of $x_i$, with $V_i(0)=U_i$. Consider a fixed value of $k$.
Since the $U_i$'s are not mutually singular, neither are the $V_i(k)$'s. Thus we may apply what we have done for the $U_i$'s, and find a proper subset $I(k)$ of $\{1, \dots , L\}$ such that 
$ h_{pol}(\{V_i, i\in I(k)\}) = h_{pol}(\{V_1(k), \dots, V_L(k)\})$. 
Since there are finitely many proper subsets of $\{1, \dots , L\}$, one of it, say $I$, equals $I(k)$ for infinitely many $k$'s: up to extracting a subsequence, we have for every $k$
$h_{pol}(\{V_i(k), i\in I\}) = h_{pol}(\{V_1(k), \dots, V_L(k)\})$.
From this and the definition of the polynomial entropy of a finite set, we deduce that 
$h^{loc}_{pol}(\{x_i, i=1 \dots L\}) = h^{loc}_{pol}(\{x_i, i\in I\})$. This completes the proof of the proposition.
\end{proof}


%
%
%
%
%

\section{Polynomial entropy of Brouwer homeomorphisms}\label{s.brouwer}
In this section we will prove Theorem~\ref{theo}. We first compute the polynomial entropy of the translation, which  is easy. Then we show that if a Brouwer homeomorphism $f$ is not conjugate to the translation, its polynomial entropy is greater or equal to $2$; this follows from classical dynamical properties of Brouwer homeomorphisms. Then we construct, for every $\alpha \in [2,+\infty]$, a Brouwer homeomorphism $f_\alpha$ whose polynomial entropy is $\alpha$. For $\alpha< +\infty$  the map $f_{\alpha}$ will be obtained by gluing $\lfloor \alpha\rfloor-1$ translations. For ``monotone'' gluings we would only get polynomial entropy equal to 2, no matter the number of glued translations: thus the gluing maps must be carefully "twisted" in order to get as many independent transition times as possible near the elements of a finite singular set.  The construction of $f_{+\infty}$ may be obtained either by gluing together copies of $f_1, f_2, \dots$, or by gluing infinitely many translations by a direct generalization of the constructions for finite values of $\alpha$ (for this case the details are left to the reader).

\bigskip

Now let us prove point 1 and 2 of the theorem. Let $f$ be a Brouwer homeomorphism.

Assume $f$ is a translation. For any non empty compact subset $Y$ of the plane, there is some $M$ such that if some point $x$ has two iterates $f^{n_1}(x)$, $f^{n_2}(x)$ in $Y$ then $|n_1-n_2|\leq M$. It follows immediatly that $h_{pol}(f;Y) = 1$. By Proposition~\ref{prop.localization} we get $h_{pol}(f) = 1$, which is the converse implication in point 1 of the theorem.

Now assume $f$ is not a translation. Then there exists two points $x,y$ such that $\{x,y\}$ is singular in the sense of section~\ref{ss.singular-set} (see for instance~\cite{Nak95b})\footnote{The argument is probably due to Kerekjarto and may be summurized as follows: if there is no singular pair then the space of orbits $\bbR^2/f$ is Hausdorff, the quotient map is a covering map, thus the quotient is a surface with fundamental group $\bbZ$; the classification of surfaces tells us that it is an infinite annulus, and this provides a conjugacy between $f$ and a translation.}. Let  $U,V$ be two wandering compact neighborhoods of $x,y$, and let $A$ be the set of integers $n$ such that $f^n(U)$ meets $V$. On the one hand, by definition of singular pairs, $A$ contains arbitrarily large (positive or negative) integers: up to exchanging $U$ and $V$, we can assume it contains arbitrarily large positive integers. On the other hand, it is a property of Brouwer homeomorphisms that the set of such $n$ is an interval of the integers (this is a special case of Franks's lemma, see for instance~\cite{Ler99}, Lemma 7). Thus it contains the infinite interval $\{M,M+1, \dots, \}$ for some $M$. 
Now we conclude, exactly as in the linear example in section~\ref{ss.coding}, that $h_{pol}(f;\{U,V\}) = 2$. This provides the lower bound $h_{pol}(f) \geq 2$ by Lemma~\ref{lem.inequality} or Proposition~\ref{prop.localization}, which proves both point 2 and the direct implication of point 1.


\subsection{Brouwer homeomorphisms by gluing translations}
The end of the paper is devoted to the construction of Brouwer homeomorphisms with higher polynomial entropy (point 3 of Theorem~\ref{theo}). In this section we recall how one can glue several translations together in order to get more complicated Brouwer homeomorphisms. For simplicity we restrict the construction to time one maps of flows (that is enough to get all the values for the polynomial entropy). More details can be found in~\cite{Nak95,BegLer03}.

Let $L \geq 2$. We consider $L$ copies $P_{1}, \dots , P_{L}$ of the plane $\bbR^2$, and denote by $O_{k}$ the open half plane $\{y>0\}$ in $P_{k}$. For each $k=1, \dots, L-1$, let $\Phi_{k,k+1}: O_{k} \to O_{k+1}$ be of the form
$$
(x,y) \longmapsto (x+\varphi_{k,k+1}(y), y)
$$
where $\varphi_{k,k+1}$ is a continuous map from $(0,+\infty)$ to $\bbR$ whose limit in $0$ is $-\infty$. Let $P$ be the quotient space
$$
\cup P_{k} / \sim
$$
where $\sim$ denotes the equivalence relation generated by the identification of every point $(x,y)$ in $O_{k}$ to the point $\Phi_{k,k+1}(x,y)$ in $O_{k+1}$. The reader can check that $P$ is a Hausdorff simply connected non compact surface, and thus is homeomorphic to the plane.

Let $T : \cup P_{k} \to \cup P_{k}$ be defined as the translation $(x,y) \mapsto  (x+1,y)$ on each $P_{k}$.
The map $T$ commutes with each $\Phi_{k,k+1}$, and thus it defines a Brouwer homeomorphism $f: P \to P$.

We note that the singular sets of $f$ consists of all sets $\{M_1, \dots , M_L\}$ with $M_k$ on the boundary of the half plane $O_{k}$ in $P_k$, and their subsets. 
By Propositions~\ref{prop.localization} and~\ref{prop.singulier}, the polynomial entropy is the supremum of the entropy of singular finite sets of points. Since the polynomial entropy of a set of $L$ points is no more than $L$, we deduce the upper bound $h_{pol}(f_{\alpha}) \leq L$.
In particular if $L=2$ then the polynomial entropy equals 2, whatever the gluing map $\Phi_{1,2}$, as a consequence of point 2 of Theorem~\ref{theo}.

\subsection{Construction of $f_\alpha$}
 We now fix $L \geq 3$ and $\alpha \in (L-1,L]$. We are going to specify the gluing maps $\Phi_{k,k+1}$ so as to obtain polynomial entropy equal to $\alpha$.
We denote by $f_{\alpha}$ the resulting Brouwer homeomorphism, where the gluing maps have the following properties (see figure~\ref{fig.graphs}).

\paragraph{Assumption on the $\varphi_{k,k+1}$}
We set $\alpha'=\alpha-L+1$, which belongs to $(0,1]$.

\begin{itemize}
\item $\varphi_{1,2}$ is negative, increasing, and, for each positive integer $k_1$, take the value $-k_1$ on a non trivial interval $I_{k_1}$. This collection of intervals tends to $0$ when $k_1$ tends to $+\infty$. For convenience we assume that all these intervals are included in the interval $(0, \frac{2}{3}]$.

\item For each positive integer $k_1$, the restriction of $\varphi_{2,3}$ to $I_{k_1}$ is increasing from $-2k_1$ to $-k_1$, and for each integer $-k_2$ between $-2k_1$ and $-k_1$, it takes the value $-k_2$ on a non trivial sub-interval $I_{k_1,k_2}$ of $I_{k_1}$. All these intervals are called the steps of order $k_1$ of $\varphi_{2,3}$.
Between two successive steps $\varphi_{2,3}$ is monotonous.

\item Likewise, $\varphi_{3,4}$ is increasing on each step $I_{k_1,k_2}$ of order $k_1$ of $\varphi_{2,3}$, and takes each integer values $-k_3$ between $-2k_1$ and $-k_1$ on a non trivial sub-interval $I_{k_1,k_2,k_3}$ of $I_{k_1,k_2}$.

\item And so on, until $\varphi_{L-1,L}$: on each step of order $k_1$ of $\varphi_{L-2,L-1}$, this map is increasing and takes each integer value $-k_{L-1}$ between $-k_1-k_1^{\alpha'}$ and $-k_1$ on a sub-interval $I_{k_1,\dots, k_{L-1}}$ of $I_{k_1,\dots, k_{L-2}}$. All these maps are monotonous between two successive steps.
\end{itemize}

Note that these properties may be realized by gluing maps which are $C^\infty$; then the topological plane $P$ is endowed with a $C^\infty$-structure for which $f_\alpha$ is a $C^\infty$-diffeomorphism. By uniqueness of the $C^\infty$ structure of the plane, we can transport $f_\alpha$ into a $C^\infty$ diffeomorphism of the usual plane $\bbR^2$, which is the time one map of a $C^\infty$ vector field.

\begin{center}
\begin{figure}[hp]
%

\includegraphics[scale=1]{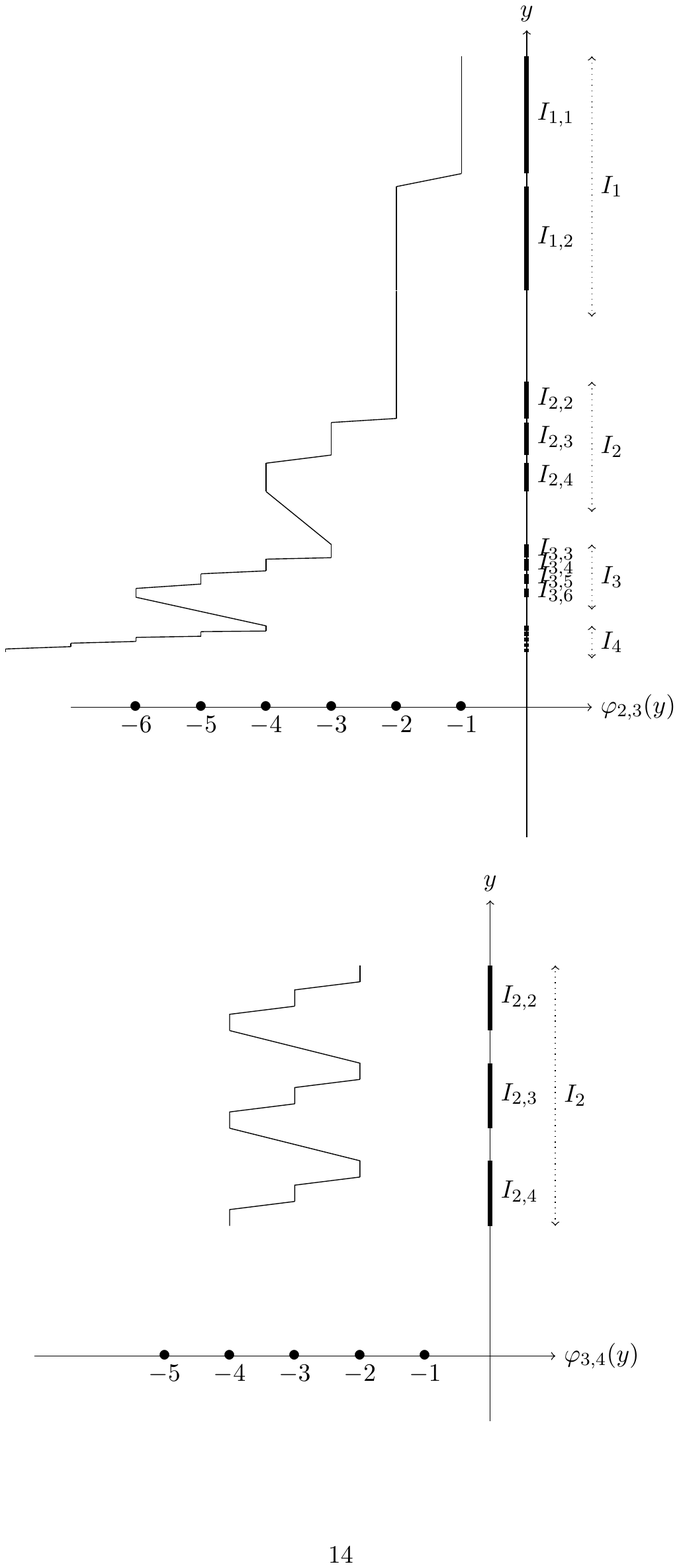} 
\includegraphics[scale=0.8]{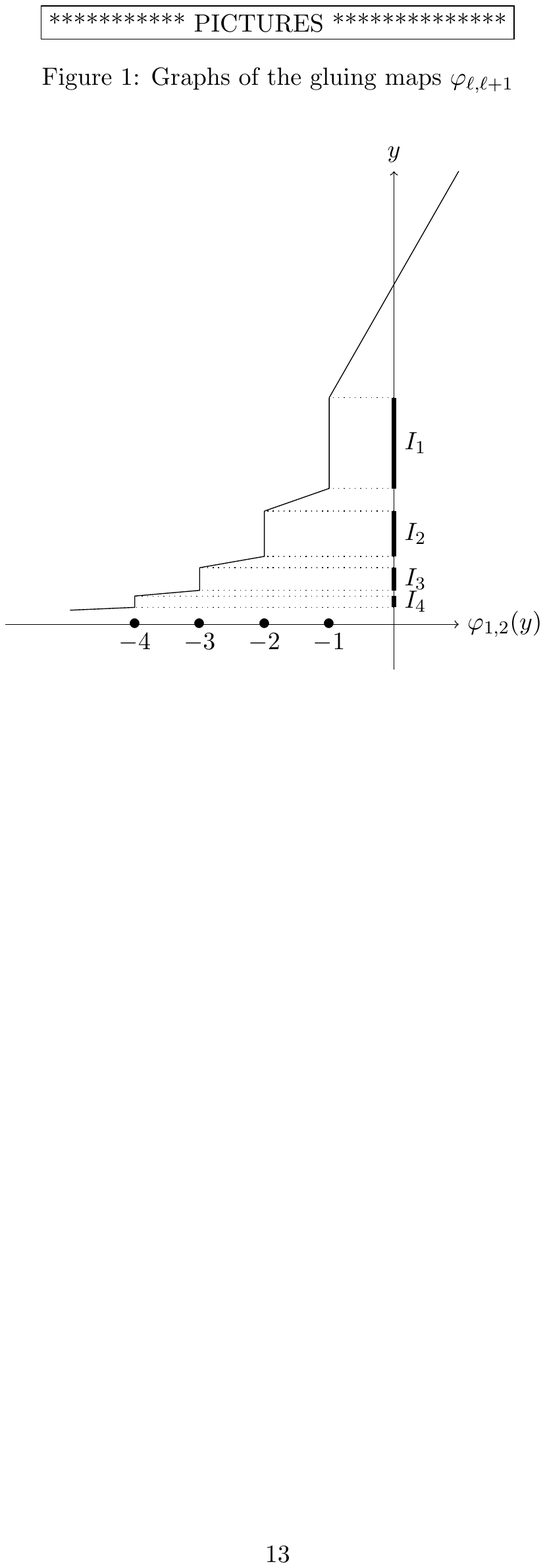} 
\includegraphics[scale=0.8]{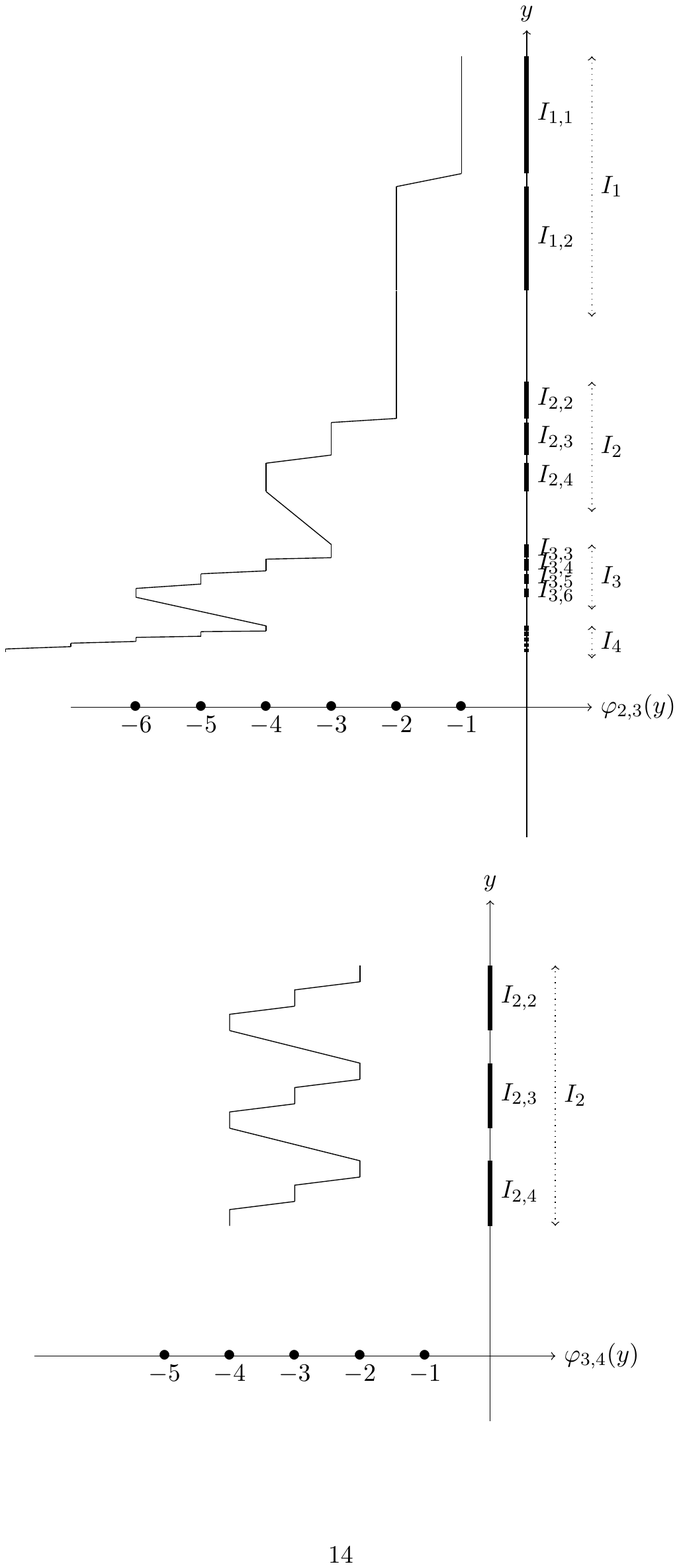} 

\caption{Graphs of the gluing maps $\varphi_{\ell,\ell+1}$ \label{fig.graphs}}
\end{figure}
\end{center}

%
%
%
%

\subsection{Polynomial entropy of $f_{\alpha}$}
The key to the computation of polynomial entropy is the following estimate.
\begin{lemm}\label{lemm.entropy-estimate}
Let $U_{i}$ be the set $[-\frac{2}{3},\frac{2}{3}]^2$ in the plane $P_{i}$. Then
$$
h_{pol}(U_{1}, \dots, U_{L}) = \alpha.
$$
\end{lemm}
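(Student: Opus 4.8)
The plan is to bound $h_{pol}(U_1,\dots,U_L)$ both from above and from below by $\alpha$, working directly with the coding $\cA_n(\{U_1,\dots,U_L\})$. Since the $U_i$ are wandering (each $U_i$ is a small square disjoint from its image under $T$), every word in $\cA_n$ contains each letter $U_i$ at most once, so a word is determined by the set of letters it uses together with the positions of those letters. By additivity and the partition over which subset of $\{U_1,\dots,U_L\}$ actually occurs, it suffices to control $\sharp\cA_n(\cF;\cF)$, the words using \emph{all} $L$ letters $U_1,\dots,U_L$ plus $\infty$; the contribution of proper subfamilies is handled by induction on $L$ (the base case $L=2$ giving entropy $2\le\alpha$, exactly as in the translation computation already done). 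So the heart of the matter is: count, up to a factor polynomial of lower degree, the number of $L$-tuples of times $0\le t_1<\dots$ (in some order) within $\{0,\dots,n-1\}$ such that some single orbit passes through $U_1$ at time $t_1$, through $U_2$ at time $t_2$, etc.

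The key step is to translate "an orbit of $f_\alpha$ visits $U_1,\dots,U_L$ at prescribed times" into an arithmetic condition on the transition times coming from the gluing maps. An orbit of $f_\alpha$ starting in $P_1$ near the boundary $\{y>0\}$ is, on each sheet $P_k$, just a horizontal translation orbit $(x_k+m,y)$; to pass from sheet $k$ to sheet $k+1$ it must, at the moment of crossing, land at a point of $O_k$ whose image under $\Phi_{k,k+1}$ lies in the right place, and the number of steps spent in sheet $k+1$ before reaching $U_{k+1}$ is governed by $\varphi_{k,k+1}(y)$, i.e.\ by $-k_1,-k_2,\dots$ on the nested intervals $I_{k_1},I_{k_1,k_2},\dots$. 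Concretely, for the orbit to hit all of $U_1,\dots,U_L$ the parameter $y$ must lie in some step $I_{k_1,\dots,k_{L-1}}$, and then the transition times between consecutive $U_k$'s are, up to bounded additive error, $k_1$ (from $U_1$ to $U_2$), $k_2$ (from $U_2$ to $U_3$), \dots, $k_{L-1}$ (from $U_{L-1}$ to $U_L$), with the constraints $k_1\le k_2\le 2k_1$, $k_1\le k_3\le 2k_1$, \dots, and for the last one $k_1\le k_{L-1}\le k_1+k_1^{\alpha'}$. The total length used is $\sim k_1+k_2+\dots+k_{L-1}$, which is $\asymp k_1$ since all $k_j\in[k_1,2k_1]$; so "fits in a window of length $n$" forces $k_1\lesssim n$, and the initial time of the first visit ranges over $\asymp n$ further values.

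Hence $\sharp\cA_n(\cF;\cF)$ is, up to bounded multiplicative constants and a factor $\asymp n$ for the starting time, the number of tuples $(k_1,\dots,k_{L-1})$ with $k_1\le n$, $k_1\le k_j\le 2k_1$ for $2\le j\le L-2$, and $k_1\le k_{L-1}\le k_1+k_1^{\alpha'}$. For fixed $k_1$ this is $\asymp k_1^{L-3}\cdot k_1^{\alpha'}=k_1^{L-3+\alpha'}$ choices; summing over $k_1\le n$ gives $\asymp n^{L-2+\alpha'}=n^{\alpha-1}$, and multiplying by the $\asymp n$ starting positions yields $\sharp\cA_n(\cF;\cF)\asymp n^{\alpha}$. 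Taking $\limsup\log(\cdot)/\log n$ gives $h_{pol}(\cF;\cF)=\alpha$; combined with the inductive bound $\le L-1\le\alpha$ for proper subfamilies and monotonicity/additivity (Lemma~\ref{lemm.add-mon}) one concludes $h_{pol}(U_1,\dots,U_L)=\alpha$.

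\textbf{Main obstacle.} The routine part is the summation $\sum_{k_1\le n}k_1^{L-3+\alpha'}\asymp n^{L-2+\alpha'}$; the delicate part is the bookkeeping that turns the geometry of the nested steps $I_{k_1,\dots,k_j}$ into clean "transition time $\approx k_j$ up to an $O(1)$ error" statements — one must check that within a fixed step $I_{k_1,\dots,k_{L-1}}$ all orbit parameters $y$ give the \emph{same} coding (so no overcounting), that distinct steps give distinct codings (so no undercounting), and that the $O(1)$ slack at each crossing — coming from the width of $U_k$ and the fact that $\Phi_{k,k+1}$ moves the $x$-coordinate by $\varphi_{k,k+1}(y)$ which is only \emph{approximately} an integer $-k_j$ on $I_{k_1,\dots,k_j}$ — does not inflate the count by more than a bounded factor. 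I expect that this is where the argument needs care, whereas the exponent arithmetic $L-2+\alpha'=\alpha-1$, $(\alpha-1)+1=\alpha$ is immediate from $\alpha'=\alpha-L+1$.
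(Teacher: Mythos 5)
Your plan follows the paper's proof essentially step for step: the lower bound counts orbits parametrized by a starting time and a tuple $(k_1,\dots,k_{L-1})$ picking a nested step interval $I_{k_1,\dots,k_{L-1}}$, and the upper bound reduces to words using all $L$ letters, bounds the transition times from the geometry of the $\varphi_{k,k+1}$'s, and observes that words using a proper subfamily contribute only $O(n^{L-1})$ with $L-1<\alpha$. Two small corrections to keep in mind if you write this out: the squares $U_i=\left[-\tfrac{2}{3},\tfrac{2}{3}\right]^2$ have side $\tfrac{4}{3}>1$, so $U_i\cap T(U_i)\neq\emptyset$ and $U_i$ is \emph{not} wandering --- a letter $U_k$ can appear twice, at consecutive times, and the paper absorbs this by a bounded factor $2^{L-1}$ rather than by your assertion that each letter occurs at most once; and the proposed ``induction on $L$'' for proper subfamilies is both unnecessary (the direct count $n^{L-1}$ already suffices) and slightly misleading, since a proper subfamily of $\{U_1,\dots,U_L\}$ under $f_\alpha$ is not an instance of a lower-$L$ construction.
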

Before proving the lemma we explain how we deduce that $h_{pol}(f_\alpha)=\alpha$. By additivity (lemma~\ref{lemm.add-mon}, point 2) we have 
$h_{pol}(U_{1}, \dots, U_{L}) = h_{pol}(U_{1} \cup \dots \cup U_{L})$ and by localization (Proposition~\ref{prop.localization}) this gives the lower bound 
$$
h_{pol}(f_{\alpha}) \geq \alpha.
$$
On the other hand, by the formula that relates the polynomial entropy and the local polynomial entropy of finite sets of mutually singular points, and by the description of all such sets given above, we get
$$
h_{pol}(f_{\alpha}) = \sup \{h^{loc}_{pol}(f_{\alpha} ; x_{1}, \dots , x_{L}) , x_{k} \in \partial O_{k} \text{ for } k=1, \dots, L\}.
$$
Furthermore the formula of Lemma~\ref{lemm.iterates} says that the polynomial entropy is unchanged when we replace a point by one of its iterates. Since every point in $\partial O_{k}$ has an iterate in the interior of $U_{k}$, 
 in the last formula we can further restrict the supremum by demanding that each $x_{k}$ belongs to $\partial O_{k} \cap \inte(U_{k})$. By definition the local polynomial entropy of $x_{1}, \dots , x_{L}$ is less or equal to the polynomial entropy of $U_{1}, \dots , U_{L}$, and thus so is $h_{pol}(f_{\alpha})$. This proves the upper bound. (Note that the argument also proves that every compact set whose interior meets each $\partial O_k$ has polynomial entropy equal to $h_{pol}(f_\alpha)$.)

\bigskip

It remains to prove Lemma~\ref{lemm.entropy-estimate}.
\begin{proof}[Proof of the lemma]~~

\paragraph{Lower bound}

Let $n$ be a positive integer, we want a lower bound on the number of elements of $\cA_n
(f; \{U_1, \dots , U_L\})$.

Let $k_1$ be a positive  integer less than  $\frac{n}{2L}$. 
Let $k_2, \dots, k_{L-2}$ be integers between $k_1+1$ and $2k_1$.
Let $k_{L-1}$ be an integer between $k_1+1$ and $k_1+k_1^{\alpha'}$.
Finally, choose some $y$ in the interval $I_{k_1,\dots, k_{L-1}}$.
Let $z$ be the point of the plane whose coordinates in the plane $P_1$ are $(0,y)$.
The point $z$ is in $U_1$. Since $\varphi_{1,2}(y)=-k_1$, the coordinates of $z$ in the plane $P_2$ are $(-k_1, y)$, thus an iterate $f^{k}(z)$ is in $U_2$ if and only if $k=k_1$. Likewise 
$\varphi_{2,3}(y)=-k_2$, thus the coordinates of $f^{k_1}(z)$ in $P_3$ are $(-k_2,0)$, and an iterate $f^{k}(f^{k_1}(z))$ is in $U_3$ if and only if $k=k_2$, and so on.
 Let $k_0$ be an integer between $1$ and $k_1$.
Since $k_1$ is less than $\frac{n}{2L}$, the coding of the $n$ first terms of the orbit of $f^{-k_0}z$ begins by
$$
\underbrace{\infty, \dots,\infty}_{k_0  \text{ letters}}, U_1, 
\underbrace{\infty, \dots,\infty}_{(k_1 -1) \text{ letters}}, U_2,
\underbrace{\infty, \dots,\infty}_{(k_2 -1) \text{ letters}}, U_3, \dots \dots, U_{L-1},
\underbrace{\infty, \dots, \infty}_{(k_{L-1} -1) \text{ letters}},U_L.
$$
Distinct values of the $k_\ell$'s provide distinct codings. Furthermore for a given value  of $k_1$ we have ${k_1}^{L-2}\lfloor{k_1}^{\alpha'}\rfloor \geq (k_1-1)^{\alpha-1}$ possibilities for the $(L-2)$-uplet $(k_0, k_2, k_3, \dots, k_{L-1})$.
Thus we get the lower bound
$$
\sharp \cA_n (f; \{U_1, \dots , U_L\}) \geq 
\sum_{1 \leq k \leq \frac{n}{2L}} (k-1)^{\alpha-1}
$$
Comparing with an integral gives a lower bound which is equivalent, when $n$ tends to $+\infty$, to 
$$
\frac{1}{(2L)^\alpha \alpha}  n^\alpha
$$
(Note that if we are lazy we may restrict to values of $n$ that are multiples of $2L$, since we only want a limsup at the end).
Thus $h_{pol}(f ;  \{U_1, \dots , U_L\}) \geq \alpha$.

\paragraph{Upper bound}
We consider again an integer $n$ and look for an upper bound for the number of elements of $\cA_n (f; \{U_1, \dots , U_L\})$. Since each map $\varphi_{k,k+1}$ is positive, if we put aside the element which has only the letter $\infty$, every other element of this set has the form
$$
{\infty, \dots,\infty}, U_i, 
{\infty, \dots,\infty}, U_{i+1},
 \dots \dots, U_{j},
{\infty, \dots, \infty}
$$
for some $i \leq j$ in $\{1, \dots, L\}$, or a similar form where some of the letters $U_k$ are doubled (since a point may have two successive iterates in some $U_k$). First assume that $i>1$ or $j <L$. Then in the above word there are at most $L$ maximal subwords with only the letter $\infty$, each of which has length less than $n$, and the length of the last one is determined by the length of the others since the total length is $n$. Taking into account the possibility of doubling the letters, the number of such words (for fixed values of $i$ and $j$) is less than 
$$
2^{L-1}n^{L-1}
$$
which is dominated by $n^\alpha$.
It remains to estimate the number of possibilities when $i=1$ and $j=L$. 
Let $z$ be a point and denote by $(x,y)$ its coordinates in $P_1$.
If the point $z$ belongs to  $U_1$ and $f^k(z)$ belongs to $U_2$, then 
$$
\varphi_{1,2}(y) \in \left[k-\frac{4}{3},k+\frac{4}{3}\right]
$$
 Indeed if $x_1, x_2$ denote the first coordinate of $z$ respectively in $P_1, P_2$ then $x_1, x_2+k \in [-\frac{2}{3},\frac{2}{3}]$ and $x_2 = -\varphi_{1,2}(y)$. The properties of $\varphi_{1,2}$ then entail that $y$ belongs to the interval between the supremum of $I_{k-2}$ and the infimum of $I_{k+2}$. Then  $\varphi_{L-1,L}(y)$ belongs to the interval
$$
[k-2, k+2+(k+2)^\alpha].
$$
Now consider an element in $\cA_n (f; \{U_1, \dots , U_L\})$ of the form
$$
w={\infty, \dots,\infty}, U_1, 
\underbrace{\infty, \dots,\infty}_{k_1-1 \text{ letters}}, U_{2},
 \dots \dots, U_{L-1}
 \underbrace{\infty, \dots,\infty}_{k_{L-1}-1 \text{ letters}}, U_{L},
{\infty, \dots, \infty}.
$$
There are $L+1$ maximal subwords with only the letter $\infty$. Each has length at most 
$n$. Furthermore one length is determined by the other ones, and
for a given value of $k_1$, the above considerations shows that there areat most $(k+2)^{\alpha'}+2$ possibilities for the values of $k_{L-1}$. Finally, still for a given value $k_1$, we have $L-2$ intervals of length at most $n$ to be determined (and one of length $k_1$, one with only $(k+2)^\alpha+2$ possibilities, and one complementary length). Thus the number of possibilities is bounded by
$$
n^{L-2} \sum_{k=1}^n ((k+2)^{\alpha'}+2) \leq n^{L-2} \left((n+2)^{\alpha'+1} + 2n\right) \sim  n^{\alpha}.
$$
This gives  $h_{pol}(f ;  \{U_1, \dots , U_L\}) \leq \alpha$ and the proof of the lemma is complete.
\end{proof}

\bibliographystyle{alpha}
\bibliography{biblio-Brouwer-entropy}

\end{document}